\documentclass[a4paper,12pt]{article}

\usepackage[utf8]{inputenc}

\usepackage{lmodern}
\usepackage[T1]{fontenc}

\usepackage[top=1in, bottom=1.25in, left=1.25in, right=1.25in]{geometry}
\usepackage{amsmath,amssymb,amsthm}

\usepackage{tikz}
\usepackage{xcolor}
\usepackage{array}
\usepackage{enumerate}
\usepackage{graphicx}
\usepackage{float}
\usepackage{caption}
\usepackage{subcaption}

\usepackage{amsrefs}
\usepackage[colorlinks=true,linkcolor=red,citecolor=blue,urlcolor=red]{hyperref}

\theoremstyle{definition}

\theoremstyle{remark}

\newtheorem*{rem}{Remark}

\theoremstyle{theorem}
\newtheorem{theorem}{Theorem}
\newtheorem{cor}[theorem]{Corollary}
\newtheorem{lem}[theorem]{Lemma}
\newtheorem{prop}[theorem]{Proposition}

\newtheorem*{theoremA}{Theorem A}
\newtheorem*{theoremB}{Theorem B}
\newtheorem*{theoremC}{Theorem C}

\newcommand{\overbar}[1]{\mkern 1.5mu\overline{\mkern-1.5mu#1\mkern-1.5mu}\mkern 1.5mu}

\newcommand{\divides}{\bigm|}

\author{Sel\c{c}uk Kayacan\thanks{This work is supported by the T{\"U}B\.{I}TAK 2214/A Grant Program: 1059B141401085.}}
\title{Connectivity of Intersection Graphs\\ of Finite Groups\thanks{I would like thank to Franz Lehner, Wilfried Imrich and Erg\"un Yaraneri for their helpful comments.}}
\date{}

\begin{document}

\maketitle

\small

\begin{center}
  Department of Mathematics, Istanbul Technical University,\\
  34469 Maslak, Istanbul, Turkey. \\
  {\it e-mail:} \href{mailto:skayacan@itu.edu.tr}{skayacan@itu.edu.tr}
\end{center}

\begin{abstract}
  The intersection graph of a group $G$ is an undirected graph without
  loops and multiple edges defined as follows: the vertex set is the
  set of all proper non-trivial subgroups of $G$, and there is an edge
  between two distinct vertices $H$ and $K$ if and only if
  $H\cap K \neq 1$ where $1$ denotes the trivial subgroup of $G$. In
  this paper, we classify finite solvable groups whose intersection
  graphs are not $2$-connected and finite nilpotent groups whose
  intersection graphs are not $3$-connected. Our methods are
  elementary.

  \smallskip
  \noindent 2010 {\it Mathematics Subject Classification.} Primary:
  20D99; Secondary: 20D15, 20D25, 05C25.

  \smallskip
  \noindent Keywords: Finite groups; subgroup; intersection graph;
  connectivity
\end{abstract}

\subsection*{Introduction}


Let $G$ be a group. We define the \emph{intersection graph} $\Gamma(G)$ of $G$ as the simple graph with vertex set the proper non-trivial subgroups of $G$ and there is an edge between two distinct vertices if and only if their intersection is non-trivial. It is not difficult to determine finite non-simple groups having a disconnected intersection graph:

\begin{theoremA}
  Let $G$ be a finite non-simple group. Then $\Gamma(G)$ is \emph{not} connected if and only if for some prime numbers $p$ and $q$ one of the following holds.
  \begin{enumerate}
  \item $G\cong\mathbb{Z}_p\times\mathbb{Z}_p$,\, or\,
    $G\cong\mathbb{Z}_p\times\mathbb{Z}_q$.
  \item $G\cong N\rtimes A$ where
    $N\cong \mathbb{Z}_p\times\dots\times\mathbb{Z}_p$,
    $A\cong\mathbb{Z}_q$, $N_G(A)=A$, and $N$ is a minimal
    normal subgroup of $G$.
  \end{enumerate}
\end{theoremA}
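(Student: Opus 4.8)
The plan is to prove the two implications separately: the ``if'' direction is a direct check, the ``only if'' direction is the substance. For the ``if'' direction, if $G\cong\mathbb{Z}_p\times\mathbb{Z}_p$ the proper non-trivial subgroups are exactly the $p+1$ subgroups of order $p$, pairwise intersecting trivially, so $\Gamma(G)$ is an independent set on $p+1\ge 3$ vertices; if $G\cong\mathbb{Z}_p\times\mathbb{Z}_q$ with $p\ne q$ there are precisely two proper non-trivial subgroups and they intersect trivially. For the groups in~(2), note that, $N$ being abelian, a subgroup of $N$ is normal in $G=NA$ if and only if it is $A$-invariant; hence the hypothesis that $N$ is minimal normal means the only $A$-invariant subgroups of $N$ are $1$ and $N$, and so $A$ is a maximal subgroup of $G$ (an intermediate $A<K<G$ would, via the Dedekind identity $K=(K\cap N)A$, yield an $A$-invariant subgroup strictly between $1$ and $N$). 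Since $N_G(A)=A\ne G$, the subgroup $A$ is non-normal and has $[G:A]=|N|\ge 2$ conjugates, each maximal and of prime order; and a maximal subgroup of prime order is an isolated vertex of $\Gamma(G)$, as any neighbour would contain it properly and hence equal $G$. Thus $\Gamma(G)$ has at least two isolated vertices.

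For the ``only if'' direction, assume $G$ is finite, non-simple and $\Gamma(G)$ disconnected; then $G\ne 1$, so $G$ has a minimal normal subgroup $N$, which is proper since $G$ is not simple. Let $\mathcal{C}$ be the connected component of the vertex $N$ in $\Gamma(G)$; by hypothesis some vertex $H$ lies outside $\mathcal{C}$. The engine of the argument is that \emph{every proper non-trivial subgroup meeting $N$ non-trivially is adjacent to $N$, hence lies in $\mathcal{C}$}. Applying this repeatedly I would show, in order: (i) $H\cap N=1$, for otherwise $H$ is adjacent to $N$; (ii) $HN=G$, for otherwise $H\subsetneq HN\subsetneq G$ with $HN\in\mathcal{C}$ forces $H\in\mathcal{C}$ --- so $H$ is a complement of $N$ and $|H|=[G:N]$; (iii) $H$ is maximal in $G$, for otherwise $H\subsetneq K\subsetneq G$ gives, via $K=(K\cap N)H$, a vertex $K\cap N$ strictly between $1$ and $N$ which lies in $\mathcal{C}$ and is adjacent to $K$, so $K$ and hence $H$ lie in $\mathcal{C}$; (iv) $H$ has no proper non-trivial subgroup, for such an $L$ would satisfy $L\cap N=1$ and, as $|L|<|H|=[G:N]$, also $LN\subsetneq G$, whence $L\in\mathcal{C}$ and then $H\in\mathcal{C}$. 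Therefore $H\cong\mathbb{Z}_r$ for a prime $r$, $[G:N]=r$, and $G=N\rtimes\langle t\rangle$ with $|t|=r$ and $H=\langle t\rangle$ maximal. (One must keep verifying here that the auxiliary subgroups $HN$, $K\cap N$, $LN$ really are vertices, i.e.\ proper and non-trivial, and really are distinct from the subgroups they are paired with; this bookkeeping is the fussiest part, although each check is routine.)

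To conclude, write $N\cong S^k$ with $S$ simple. If $S$ is non-abelian, I expect a contradiction to the maximality of $\langle t\rangle$: if $r\mid|N|$, a Sylow $r$-subgroup of $G$ contains $\langle t\rangle$ properly and is a proper subgroup (as $N$, hence $G$, is not an $r$-group); if $r\nmid|N|$, choose a prime $\ell\mid|N|$ and a Sylow $\ell$-subgroup $Q$ of $N$, so that $1<Q<N$, whence $Q$ is not normal in $G$ by minimality of $N$, and the Frattini argument gives $G=N\,N_G(Q)$ with $N_G(Q)\subsetneq G$, $|N_G(Q)|=r\,|N\cap N_G(Q)|>r$, and the $r$-part of $|N_G(Q)|$ equal to $r$; then $N_G(Q)$ contains a Sylow $r$-subgroup of $G$, a conjugate of $\langle t\rangle$, properly inside the proper subgroup $N_G(Q)$. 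Either way a conjugate of $\langle t\rangle$, hence $\langle t\rangle$, is not maximal, a contradiction; so $S$ is abelian and $N\cong\mathbb{Z}_p^{\,n}$. If $t$ centralises $N$, then $G=N\times\langle t\rangle$ is abelian and maximality forces $n=1$ (for $n\ge 2$ any $1<P<N$ gives $\langle t\rangle\subsetneq\langle t\rangle P\subsetneq G$), so $G\cong\mathbb{Z}_p\times\mathbb{Z}_r$ and we are in case~(1); this branch also absorbs the case where $G$ is a $p$-group, a minimal normal subgroup of a $p$-group being central. Otherwise $\langle t\rangle$ is non-normal and maximal, so $N_G(\langle t\rangle)=\langle t\rangle$, while $p\ne r$ (else $G$ is an $r$-group, in which no subgroup of order $r$ is self-normalising), and $G$ is exactly as in case~(2) with $q=r$. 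I expect the main obstacle to be marshalling the pushing steps (i)--(iv) and the final case split carefully enough that the $p$-group, abelian, and $p=q$ situations all emerge from this single argument rather than needing separate treatment.
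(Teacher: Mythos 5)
Your proof is correct and follows essentially the same route as the paper's (which is due to Isaacs): fix a minimal normal subgroup $N$, show that any vertex outside its component is a maximal complement of prime order, rule out a non-elementary-abelian $N$ via the Frattini argument, and split on whether the complement is normal or self-normalising. The only cosmetic differences are that you verify disconnectedness in case (2) by exhibiting at least two isolated conjugates of $A$ rather than by classifying all proper subgroups of $G$, and you organise the final case analysis around $N\cong S^k$ and the dichotomy $r\mid |N|$ versus $r\nmid |N|$ rather than around the Sylow $q$-subgroup containing $A$.
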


In \cite{Shen2010}, Shen proved this result and also showed that intersection graphs of (non-abelian) simple groups are connected, thereby completed the classification for all finite groups. Here we shall give a different proof for Theorem~A which is due to I. M. Isaacs. In an earlier work \cite{Lucido2003}, Lucido classified finite groups whose poset of proper non-trivial subgroups are connected. Obviously, $\Gamma(G)$ is connected if and only if the poset of proper non-trivial subgroups of $G$ is connected.

The aim of the present paper is to give a more detailed account of the ``connectivity'' of intersection graphs. For a connected graph $\Gamma$, a subset $\mathcal{S}$ of the vertex set $V(\Gamma)$ is said to be a \emph{separating set}, if removal of the vertices in $\mathcal{S}$ yields more than one component. We say $\Gamma$ is \emph{$k$-connected} if $|V(\Gamma)|>k$ and there is no separating set of cardinality $<k$. We define the \emph{connectivity} $\kappa(\Gamma)$ of $\Gamma$ as the greatest value of $k$ such that $\Gamma$ is $k$-connected. By convention, the connectivity of the complete graph $K_n$ on $n$ vertices is $n-1$. Hence, $1$-connected graphs form the class of connected graphs with at least two vertices. Clearly, $\Gamma$ is not connected if and only if $\kappa(\Gamma)=0$.
By abuse of notation, we denote the connectivity of the intersection graph of $G$ by $\kappa(G)$. For solvable groups we proved the following theorem. We denote the Frattini subgroup of $G$, i.e. the intersection of all maximal subgroups of $G$, by $\Phi(G)$.

\begin{theoremB}
  Let $G$ be a finite solvable group. Then $\kappa(G)<2$ if and only
  if for some prime numbers $p$ and $q$ one of the following holds.
  \begin{enumerate}
  \item $|G|=p^{\alpha}$ with $0\leq \alpha \leq 2$.
  \item $|G|=p^3$ and neither $G\cong Q_8$ nor
    $G\cong \mathbb{Z}_p\times\mathbb{Z}_p\times\mathbb{Z}_p$.
  \item $|G|=p^2q$ with a Sylow $p$-subgroup $P$ such that either
    \begin{enumerate}
    \item $P\cong\mathbb{Z}_{p^2}$, or
    \item $P\cong \mathbb{Z}_p\times\mathbb{Z}_p,\,P\triangleleft G,$ and there exists a non-normal subgroup of $G$ of order $p$.
    \end{enumerate}
  \item $G=PQ$ is a group of order $p^{\alpha}q$ ($\alpha\geq 3$) with
    $P$ being the normal Sylow $p$-subgroup of $G$ such that either
    \begin{enumerate}
    \item $P$ is elementary abelian, $Q$ acts on $P$ irreducibly, and
      the order of $N_G(Q)$ is at most $pq$, or
    \item $N:=\Phi(P)$ is elementary abelian, $Q$ acts on both $N$ and
      $P/N$ irreducibly, and either $N_G(Q)=Q$ or
      $N_G(Q)=NQ\cong\mathbb{Z}_p\times\mathbb{Z}_q$.
    \end{enumerate}
  \end{enumerate}
  In particular, any solvable group whose order is divisible by at least three distinct primes is $2$-connected.
\end{theoremB}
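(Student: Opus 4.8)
\medskip
\noindent\textit{Sketch of the intended argument.}

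By definition $\kappa(G)<2$ holds exactly when $\Gamma(G)$ has fewer than three vertices, or is disconnected, or is connected with a cut vertex, and these three cases get progressively harder. For the first, an elementary count shows that a finite group with at most two proper non-trivial subgroups is cyclic of order $1$, $p$, $p^{2}$ or $p^{3}$, or else $\cong\mathbb{Z}_{p}\times\mathbb{Z}_{q}$; the cyclic ones appear in items~(1)--(2) and $\mathbb{Z}_{p}\times\mathbb{Z}_{q}$ is disconnected. The disconnected case is Theorem~A, and one checks directly that its solvable members of order divisible by the square of a prime are accounted for by items~(1)--(4) --- for instance in Theorem~A(2) with $|N|=p^{2}$ irreducibility of the $A$-action forbids a normal subgroup of order $p$, placing the group in~(3b), while $|N|=p^{m}$ with $m\ge3$ places it in~(4a) with $N_{G}(Q)=Q$ of order $q\le pq$. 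So the real content is: classify the solvable $G$ for which $\Gamma(G)$ is connected and has a cut vertex.

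I would first settle the ``in particular'' clause, since it also confines the classification to groups with one or two prime divisors. Suppose $k\ge3$ distinct primes divide $|G|$. Solvability lets me pick, by Hall's theorem, for each prime $\ell\mid|G|$ a Hall $\ell'$-subgroup $H_{\ell}$ (order prime to $\ell$, index a power of $\ell$); these are $k$ distinct proper non-trivial subgroups, and for $\ell\ne m$ the product formula gives $|H_{\ell}\cap H_{m}|\ge|H_{\ell}|\,|H_{m}|/|G|>1$, the strict inequality because there is a third prime, so $\{H_{\ell}\}$ is a clique of size $\ge3$. Also, any proper non-trivial $H$ contains a non-trivial Sylow $p$-subgroup $P_{0}$ for some $p\mid|H|$, and for each of the $k-1\ge2$ primes $\ell\ne p$ the $p$-group $P_{0}$ is an $\ell'$-group, hence (Hall again) lies in some Hall $\ell'$-subgroup, which is then adjacent to $H$. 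Thus every vertex is adjacent to Hall subgroups of at least two different ``missing-prime'' types, and Hall subgroups of different types are mutually adjacent. It follows that after deleting any single vertex $C$ one can still join any two remaining vertices by a path: route each through a surviving Hall neighbour, and if those two Hall subgroups have the same type, bridge them with a Hall subgroup of a third type --- which exists since $k\ge3$ and can be chosen $\ne C$. Hence $\Gamma(G)$ has $\ge3$ vertices and no cut vertex, i.e.\ $\kappa(G)\ge2$.

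It then remains to treat the cut-vertex case with at most two primes. For $|G|=p^{\alpha}$ this is short: $\alpha\le2$ is item~(1); for $\alpha\ge4$, $\Gamma(\mathbb{Z}_{p^{\alpha}})=K_{\alpha-1}$ is $2$-connected, and for non-cyclic $G$ a short argument using the clique of maximal subgroups (any two meet, by an order count) together with $\Phi(G)$ shows $\kappa(G)\ge2$, as it does for every elementary abelian group of rank $\ge3$; for $\alpha=3$, either $G=\mathbb{Z}_{p^{3}}$ (two vertices), or $G\in\{Q_{8},\mathbb{Z}_{p}\times\mathbb{Z}_{p}\times\mathbb{Z}_{p}\}$ (both $2$-connected, $\Gamma(Q_{8})=K_{4}$), or $G/\Phi(G)\cong\mathbb{Z}_{p}\times\mathbb{Z}_{p}$ with more than one subgroup of order $p$ --- and then any subgroup of order $p$ other than $\Phi(G)$ lies in a unique maximal subgroup, so it is a leaf of $\Gamma(G)$ and $\Gamma(G)$ has a cut vertex; this is exactly item~(2). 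Finally, for $|G|=p^{\alpha}q^{\beta}$ with $G$ not a $p$-group, I would show that a cut vertex forces $\beta=1$ and, when $\alpha\ge3$, a normal Sylow $p$-subgroup $P$ (for $\alpha=2$ such a normal Sylow subgroup is automatic), and then, analysing the action of a Sylow $q$-subgroup $Q\cong\mathbb{Z}_{q}$ on $P$ along a chief series of $G$ through $P$, that the cut vertex is essentially $N_{G}(Q)$ and that the surviving configurations are precisely those of items~(3) and~(4). The invariants doing the work are (i) the absence of a proper $Q$-invariant --- equivalently normal --- $p$-subgroup strictly between the ``visible'' subgroups, which manifests as $Q$ acting irreducibly on $P$ in item~(4a) or on $\Phi(P)$ and on $P/\Phi(P)$ in item~(4b) (and, for $\alpha=2$, as the non-triviality yielding a non-normal subgroup of order $p$), and (ii) the order of $N_{G}(Q)$, which decides whether the subgroups of order $q$, or the small $p$-subgroups, are pendant.

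The main obstacle is this last part, and within it the converse direction. Exhibiting a cut vertex in each listed case is easy: one writes down a pendant subgroup of order $p$ or $q$. The hard part will be to prove that once a numerical or irreducibility hypothesis fails, $\Gamma(G)$ has \emph{no} cut vertex --- which requires checking, uniformly over every position a candidate cut vertex $C$ might occupy and over the normal-subgroup lattice of $G$, that a single reducible constituent of the $Q$-action on $P$, or an $N_{G}(Q)$ even slightly larger than permitted (and, at the reduction steps, a non-normal Sylow $p$-subgroup or a non-cyclic Sylow $q$-subgroup), already supplies enough bridging subgroups to reconnect $\Gamma(G)-C$. That is a lengthy and delicate case analysis, and it is where I expect the bulk of the effort, and the greatest risk of slips, to lie.
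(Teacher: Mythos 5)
Your decomposition (few vertices / disconnected / cut vertex) and your Hall-subgroup argument for the ``at least three primes'' clause are sound, and the $p$-group discussion is essentially right. But the core of the theorem --- the classification for $|G|=p^{\alpha}q$ --- is left as a plan, and the plan you describe is not the one that makes the problem tractable. The paper's proof rests on a reduction you do not have (Lemma~\ref{lem:valency2}): \emph{for a finite solvable group $G$, $\kappa(G)\geq 2$ if and only if every minimal subgroup of $G$ is strictly contained in at least two proper subgroups.} Once this is in hand, the ``hard converse direction'' you single out --- showing that when the numerical/irreducibility hypotheses fail there is \emph{no} cut vertex --- collapses entirely: one never analyses candidate cut vertices at all, but only counts proper overgroups of minimal subgroups, and the forward direction amounts to exhibiting a single minimal subgroup with at most one proper overgroup. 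The lemma itself is proved once, by a short case analysis that builds two independent paths between any two minimal subgroups using a minimal normal subgroup $N$ and a maximal subgroup of prime index (together with Proposition~\ref{prop:upward}, which says it suffices to connect minimal subgroups and that minimal separating sets are upward closed). Your proposed alternative --- checking ``uniformly over every position a candidate cut vertex $C$ might occupy'' that reconnection is possible --- is exactly the delicate global analysis the valency lemma is designed to avoid, and you yourself flag it as the place where the proof is most likely to fail; as written, that step is a genuine gap, not a routine verification.

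Two smaller points. First, your reading of which disconnected groups land in which item is fine (Theorem~A(2) with $|N|=p^{2}$ does fall under~(3b), and $|N|=p^{m}$, $m\geq 3$, under~(4a)), but note that the derivation of conditions (4a)/(4b) in the connected case also needs the structural steps the paper carries out: that $A$ non-normal of order $q$ with a unique overgroup forces $P\triangleleft G$ (via a normal subgroup of index $p$ and the Frattini-type argument on $N_{M}(A)$), and that in case (4b) the unique candidate for the second normal subgroup is $\Phi(P)$, which must be elementary abelian with $Q$ acting irreducibly on both $\Phi(P)$ and $P/\Phi(P)$; none of this is sketched in your proposal beyond ``analysing the action along a chief series.'' Second, for the $p$-group case your appeal to ``the clique of maximal subgroups together with $\Phi(G)$'' again implicitly needs a bridge from a local degree condition to $2$-connectivity --- which is precisely what the valency lemma (or Proposition~\ref{prop:upward}) supplies.
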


Intuitively, intersection graphs should be highly connected graphs and if there are some examples of such graphs with `low' connectivity, they must be exceptional. By Menger's Theorem (see \cite[Theorem 3.3.6]{Diestel2005}), a graph is $k$-connected if and only if it contains $k$ independent paths between any two vertices. Hence, if $\Gamma(G)$ is $3$-connected, there must exist sufficiently many vertices in the intersection graph forming at least three independent paths between any pair of vertices. However, claiming the existence of those subgroups and also verifying that they intersect non-trivially sufficiently many times seems to be a fairly complicated problem for the class of solvable groups. For nilpotent groups we obtain the following theorem.

\begin{theoremC}
  Let $G$ be a finite nilpotent group. Then $\kappa(G)<3$ if and only if for some prime numbers $p,q$, and $r$ one of the following holds.
  \begin{enumerate}
  \item $|G|=p^{\alpha}$ $(0\leq\alpha\leq 3)$ and neither
    $G\cong Q_8$ nor
    $G\cong \mathbb{Z}_p\times\mathbb{Z}_p\times\mathbb{Z}_p$.
  \item $G$ is a group of order $p^4$ such that
    \begin{enumerate}
    \item $G\cong \mathbb{Z}_{p^4}$, or
    \item $\Phi(G)\cong\mathbb{Z}_{p^2}$ except $G\cong Q_{16}$, or
    \item
      $\Phi(G)\cong\mathbb{Z}_p\times\mathbb{Z}_p$ and $Z(G)<\Phi(G)$
      except
      \begin{enumerate}
      \item $G\cong \langle a,b,c\mid a^9=b^3=1, ab=ba, a^3=c^3,
      bcb^{-1}=c^4, aca^{-1}=cb^{-1} \rangle $, and 
      \item $G\cong \langle a,b,c\mid a^{p^2}=b^p=c^p=1, bc=cb, bab^{-1}=a^{p+1}, cac^{-1}=ab \rangle$ for $p>3$. 
      \end{enumerate}      
    \end{enumerate}
  \item $G\cong\mathbb{Z}_{p^3q}$, $G\cong\mathbb{Z}_{p^2q}$,
    $G\cong(\mathbb{Z}_p\times\mathbb{Z}_p)\times\mathbb{Z}_q$, or
    $G\cong\mathbb{Z}_{pqr}$.
  \end{enumerate}
 Moreover, any solvable group whose order is divisible by at least four
  distinct primes is $3$-connected.
\end{theoremC}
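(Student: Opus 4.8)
The plan is to exploit that a finite nilpotent group is the direct product of its Sylow subgroups, $G = P_1 \times \cdots \times P_k$, and that every subgroup inherits this decomposition: writing $H = \prod_i (H\cap P_i)$ and $K = \prod_i (K\cap P_i)$, one has $H\cap K = \prod_i\bigl((H\cap P_i)\cap(K\cap P_i)\bigr)$, so $H$ and $K$ are adjacent in $\Gamma(G)$ exactly when their components meet nontrivially at some common prime. This reduces every adjacency to the individual Sylow factors and lets me organize the argument by $k$, the number of distinct prime divisors. Two elementary principles are used throughout: the bound $\kappa(\Gamma)\le\delta(\Gamma)$, so a single vertex of degree $\le 2$ already forces $\kappa(G)<3$ (this drives the ``only if'' direction), and Menger's theorem, i.e.\ exhibiting three internally disjoint paths between an arbitrary pair of vertices (this drives the $3$-connected direction).

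For $k\ge 3$ I build a hub from the maximal Hall subgroups $M_i := \prod_{j\ne i} P_j$. When $k\ge 4$ any two of these share at least two Sylow factors, so $\{M_1,\dots,M_k\}$ is a clique of size $\ge 4$; moreover each proper nontrivial $H$ has a nontrivial component $H\cap P_i$ and is thus adjacent to every $M_j$ with $j\ne i$, hence to at least three hub vertices. A short routing lemma through the clique then yields three independent paths between any pair, which proves the ``moreover'' clause. For $k=3$ the hub $\{M_1,M_2,M_3\}$ is only a triangle and supplies just two independent paths, so the third must come from internal structure: I show that as soon as some $P_i$ properly contains a subgroup of order $p_i$ there are enough additional subgroups to route a third path, leaving $G\cong\mathbb{Z}_{pqr}$ (all factors of prime order) as the unique failure, where every prime-order subgroup has degree exactly $2$.

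For $k=2$, with $G=P\times Q$, the Hall subgroups $P,Q$ are non-adjacent, so there is no hub and connectivity must be drawn from the subgroups of $P$, of $Q$, and the mixed subgroups $A\times B$. A direct degree computation then shows that the only groups retaining a vertex of degree $\le 2$ are $\mathbb{Z}_{p^2}\times\mathbb{Z}_q$, $\mathbb{Z}_{p^3}\times\mathbb{Z}_q$ and $(\mathbb{Z}_p\times\mathbb{Z}_p)\times\mathbb{Z}_q$, i.e.\ the two-prime members of case (3); multiplying by even one more prime factor raises the minimum degree to $3$ and permits routing through the full factors. The case $k=1$ of $p$-groups of order $\le p^3$ is then handled by hand to give case (1): here the only $3$-connected groups are $Q_8$, whose intersection graph is $K_4$, and $\mathbb{Z}_p\times\mathbb{Z}_p\times\mathbb{Z}_p$, whose $p^2+p+1$ hyperplanes form a clique to which each line is joined through the $p+1$ hyperplanes containing it.

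The order-$p^4$ analysis (case (2)) is the main obstacle. There are on the order of fifteen isomorphism types, and the $3$-connectivity of each must be decided from its subgroup lattice, with the verdict depending delicately on $\Phi(G)$ and $Z(G)$ and with sporadic groups that must be excepted. The clean tool is the generalized-quaternion phenomenon: if every nontrivial subgroup contains a common central subgroup then $\Gamma(G)$ is complete and thus highly connected, which disposes of $Q_{16}$ (and explains a posteriori why $Q_8$ is exceptional in case (1)). The genuinely delicate sub-case is $\Phi(G)\cong\mathbb{Z}_p\times\mathbb{Z}_p$ with $Z(G)<\Phi(G)$, where I expect to compute the lattices of the two sporadic exceptional groups explicitly, confirming that they alone admit three independent paths while their relatives possess a separating pair. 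Casting these $p^4$ verifications as a uniform, prime-independent argument rather than a long finite check is the step I expect to require the most care.
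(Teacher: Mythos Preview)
Your overall strategy---exploiting the Sylow decomposition of nilpotent groups and routing through a hub of Hall subgroups---is sound for the nilpotent classification, but it diverges from the paper's method in one important structural way. The paper first proves a reduction lemma valid for all \emph{supersolvable} groups: $\kappa(G)\geq 3$ if and only if every minimal subgroup lies in at least three proper subgroups (the ``3-valency condition''). The point is that in a supersolvable group every maximal subgroup has prime index, so any non-minimal proper subgroup meets every maximal subgroup nontrivially; once there are at least three maximal subgroups, paths $(A,X_i,M_i,Y_i,B)$ through them are automatically independent. This collapses the entire ``if'' direction to a degree check on minimal subgroups only, and together with a companion reduction (one need only verify independent paths between \emph{minimal} subgroups, since separating sets may be taken upward closed) it eliminates most of the explicit path-building you propose for $k=2,3$. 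What your approach buys is that it never leaves the category of nilpotent groups and never invokes supersolvability; what the paper's buys is that the case analysis shrinks to ``which orders admit a minimal subgroup with at most two proper overgroups,'' which is quickly dispatched.

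There is one genuine gap. Your argument for $k\geq 4$ relies on the decomposition $H=\prod_i(H\cap P_i)$ and the claim that $H$ is adjacent to $M_j$ whenever $H\cap P_i\neq 1$ for some $i\neq j$; but the ``moreover'' clause concerns \emph{solvable} groups, where subgroups do not decompose along Sylow factors and $H\cap P_i$ need not be meaningful. The fix is short but different: use Hall's theorem to place each minimal subgroup $A$ (of prime order $p$) inside Hall $p'$-complements $A_q,A_r,A_s$ of three other primes, and use the product formula $|A_q||B_r|>|G|$ to see that such Hall subgroups for different primes always intersect nontrivially; this yields the three independent paths $(A,A_q,B_r,B)$, $(A,A_r,B_s,B)$, $(A,A_s,B_q,B)$. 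Your hub picture survives, but the justification must be order-arithmetic rather than direct-product. For the $p^4$ case both approaches converge on the same detailed lattice analysis, and your expectation that the $\Phi(G)\cong\mathbb{Z}_p\times\mathbb{Z}_p$, $Z(G)<\Phi(G)$ sub-case is the delicate one is correct.
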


Let $\Gamma$ be a simple graph with vertex set $V(\Gamma)$. A sequence $\gamma=(v_0,v_1,\dots,v_k)$ of vertices is a \emph{path} of length $k$ between $v_0$ and $v_k$, if each consecutive pair of vertices are adjacent in $\Gamma$. We call two or more paths with the same end points \emph{internally independent} provided that none of them have a common inner vertex with another. For brevity, we usually omit `internally' and say simply `independent paths'. We shall note that occasionally we attribute the graph theoretical properties of intersection graphs directly to the group itself. For example, instead of ``$\Gamma(G)$ is $k$-connected'' we may simply say ``$G$ is $k$-connected''.

\subsection*{Preliminaries}

Let $V(G)$ be the set of proper non-trivial subgroups of $G$. This vertex set $V(G)$ (of $\Gamma(G)$) naturally carries a poset structure under set inclusion and its minimal elements are the minimal subgroups of $G$. A subset $\mathcal{S}$ of $V(G)$ is \emph{upward closed} if whenever $H\in \mathcal{S}$ and $H\leq K$, then also $K\in\mathcal{S}$.

\begin{prop}\label{prop:upward}
  For a finite group $G$ with $|V(G)|>k$ the following statements are
  equivalent:
  \begin{enumerate}[(i)]
  \item $\Gamma(G)$ is $k$-connected.
  \item There is no upward closed separating set $\mathcal{S}$ of
    $\Gamma(G)$ with $|\mathcal{S}|<k$.
  \item There are at least $k$ independent paths in $\Gamma(G)$
    between any pair of minimal subgroups.
  \end{enumerate}
\end{prop}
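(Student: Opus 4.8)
The plan is to establish the cycle of implications (i)\,$\Rightarrow$\,(iii)\,$\Rightarrow$\,(ii)\,$\Rightarrow$\,(i). Two elementary observations underpin everything. First, adjacency in $\Gamma(G)$ is monotone along the subgroup order: if $H\le K$ are vertices then $L\cap K\supseteq L\cap H$ for every subgroup $L$, so $N[H]\subseteq N[K]$ for the closed neighbourhoods $N[\cdot]$; in particular every vertex adjacent to $H$ is adjacent to, or equal to, $K$. Second, every vertex $H$ contains a subgroup $m$ of prime order, and $m$ is again a proper non-trivial subgroup, hence a vertex equal or adjacent to $H$; consequently, for any $\mathcal S$ containing neither $H$ nor $m$, the vertices $H$ and $m$ lie in the same component of $\Gamma(G)-\mathcal S$.

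For (i)\,$\Rightarrow$\,(iii) I would invoke the global form of Menger's Theorem: since $|V(G)|>k$, the graph $\Gamma(G)$ is $k$-connected if and only if any two distinct vertices are joined by $k$ internally independent paths, and specialising to pairs of distinct minimal subgroups gives (iii) (which is vacuous if $G$ has at most one minimal subgroup). For (iii)\,$\Rightarrow$\,(ii) I would argue by contradiction: suppose $\mathcal S$ is an upward closed separating set with $|\mathcal S|<k$, and pick vertices $A,B$ in distinct components of $\Gamma(G)-\mathcal S$ together with subgroups $m_A\le A$, $m_B\le B$ of prime order. Because $\mathcal S$ is upward closed and $A,B\notin\mathcal S$, also $m_A,m_B\notin\mathcal S$, so by the second observation $m_A$ lies in the component of $A$ and $m_B$ in that of $B$; hence $m_A$ and $m_B$ are distinct minimal subgroups in different components of $\Gamma(G)-\mathcal S$. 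Now the $k$ internally independent $m_A$--$m_B$ paths supplied by (iii) cannot lie in $\Gamma(G)-\mathcal S$, so each uses an internal vertex of $\mathcal S$, and being internally disjoint they use $k$ distinct such vertices, forcing $|\mathcal S|\ge k$ --- a contradiction.

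The heart of the argument is (ii)\,$\Rightarrow$\,(i), which I will reduce to the claim that a separating set of \emph{minimum} cardinality in $\Gamma(G)$ is automatically upward closed. Granting this, the contrapositive is immediate: if $\Gamma(G)$ is not $k$-connected then, since $|V(G)|>k$, it has a separating set of cardinality $<k$, hence a minimum one, and that is an upward closed separating set of cardinality $<k$. To prove the claim, let $\mathcal S$ be a separating set of minimum size with $C_1,\dots,C_t$ ($t\ge 2$) the components of $\Gamma(G)-\mathcal S$; the case $\mathcal S=\emptyset$ is trivially upward closed, so assume $\mathcal S\ne\emptyset$. Minimality forces each $v\in\mathcal S$ to have a neighbour in every $C_i$ --- otherwise $C_i$ would still be a component of $\Gamma(G)-(\mathcal S\setminus\{v\})$, so $\mathcal S\setminus\{v\}$ would already separate. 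Then, given $v\in\mathcal S$ and any vertex $K\ge v$, if $K\notin\mathcal S$ then $K$ lies in some $C_j$; choosing $i\ne j$ and a neighbour $w$ of $v$ in $C_i$, monotonicity yields $w\in N[K]$ with $w\ne K$, so $wK$ is an edge of $\Gamma(G)-\mathcal S$ joining $C_i$ to $C_j$ --- impossible. Hence $K\in\mathcal S$, and $\mathcal S$ is upward closed.

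I expect the main obstacle to be precisely this last lemma. The subtleties are to set the minimality up correctly (a minimum separating set is in particular inclusion-minimal, so deleting any single vertex destroys the separation, whence $\Gamma(G)-(\mathcal S\setminus\{v\})$ is connected) and to dispose of the case $t>2$ cleanly, which the monotonicity observation does in one line. The two appeals to Menger's Theorem --- the global direction in (i)\,$\Rightarrow$\,(iii) and its easy direction implicitly in (iii)\,$\Rightarrow$\,(ii) --- are standard, and the remaining work is just bookkeeping with connected components and with the subgroup order.
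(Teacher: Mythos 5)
Your proof is correct. The engine of your (ii)$\Rightarrow$(i) step---that a separating set of minimum cardinality in $\Gamma(G)$ is automatically upward closed, via the monotonicity $N[H]\subseteq N[K]$ for $H\le K$---is exactly the key lemma in the paper's proof of (i)$\iff$(ii), though your write-up is the more careful one (you make explicit that every vertex of a minimum separating set has a neighbour in every component, where the paper only asserts the resulting path through a single deleted vertex). Where you genuinely diverge is in handling (iii). The paper proves (i)$\iff$(iii) directly: after invoking Menger it shows by hand that $k$ independent paths between minimal subgroups can be converted into $k$ independent paths between two arbitrary vertices $X$ and $Y$, splitting into cases according to whether $X$ and $Y$ contain a common minimal subgroup and patching up coincidences of inner vertices. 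You instead close the cycle (i)$\Rightarrow$(iii)$\Rightarrow$(ii)$\Rightarrow$(i), deriving (ii) from (iii) by noting that upward closedness pushes non-membership in $\mathcal{S}$ down to the prime-order subgroups $m_A\le A$, $m_B\le B$, so an upward closed separating set of size $<k$ would have to separate two \emph{minimal} subgroups, and then counting the distinct internal vertices that the $k$ independent paths are forced to leave in $\mathcal{S}$. This buys you a cleaner argument---the paper's coincidence bookkeeping disappears entirely---at the modest price of routing the path statement through the separating-set statement rather than proving the two equivalences independently. Both approaches are sound; yours is arguably tidier.
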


\begin{proof}
  (i)\!$\iff$\!(ii): By definition a graph is $k$-connected if and
  only if there is no separating set of cardinality $<k$. Thus, all we
  need to do is to show that any minimal separating set for
  $\Gamma(G)$ is upward closed (except, if $\Gamma(G)$ is a complete
  graph). Take a vertex $S\in\mathcal{S}$ where $\mathcal{S}$ is a
  minimal separating set. By the minimality of $\mathcal{S}$, for any
  two vertices $H,K\in V(G)\setminus\mathcal{S}$ there is a path
  $\gamma=(H,\dots,K)$ traversing only the points in
  $(V(G)\setminus\mathcal{S})\cup\{S\}$. Suppose that $H$ and $K$
  belong to different components (obtained after removing all the
  vertices in $\mathcal{S}$). So $\gamma$ necessarily visits $S$,
  i.e. $\gamma=(H,\dots,S,\dots,K)$. If $\overbar{S}\in V(G)$ and
  $S<\overbar{S}$, then $\bar{\gamma}=(H,\dots,\overbar{S},\dots,K)$
  is also a path from $H$ to $K$ and therefore
  $\overbar{S}\in\mathcal{S}$. Since $S$ was chosen arbitrarily,
  $\mathcal{S}$ is upward closed.

  (i)\!$\iff$\!(iii): Menger's Theorem states that a graph is
  $k$-connected if and only if it contains $k$ independent paths
  between any two vertices. Therefore, it is enough to show that
  existence of $k$ independent paths between any pair of minimal
  subgroups implies the existence of $k$ independent paths between any
  pair of subgroups in $V(G)$. If there exists a unique minimal
  subgroup of $G$, then $\Gamma(G)$ is a complete graph on more than
  $k$ vertices, thus it is $k$-connected. Suppose that there are more
  than one minimal subgroups of $G$. Let $X,Y\in V(G)$ be two distinct
  vertices and $A,B$ be two minimal subgroups with
  $\gamma_i=(A,A_i,\dots,B_i,B)$, $1\leq i\leq k$, are independent
  paths between them. Suppose that neither $X$ nor $Y$ are minimal
  subgroups. There are two cases that may occur:

  \emph{Case I:} $X$ and $Y$ contain a common minimal subgroup, say
  $A$. Then $\bar{\gamma}_i:=(X,A_i,Y)$ are independent paths provided
  that no coincidence occurs. If $X$ coincides with, say $A_1$, then
  replace $\bar{\gamma}_1$ with $(X,Y)$. If, in addition, $Y$
  coincides with, say $A_2$, then substitute
  $(X,\dots,B_1,B_2,\ldots,Y)$ for $\bar{\gamma}_2$.

  \emph{Case II:} $X$ and $Y$ contain distinct minimal subgroups, say
  $A$ and $B$ respectively. In this case, we may simply take
  $\bar{\gamma}_i=(X,A_i,B_i,Y)$ as independent paths between $X$ and
  $Y$. If $X$ or $Y$ coincides with some inner vertex, we may simply
  shorten the path accordingly.

  Finally, it is easy to see that above arguments can still be applied
  with minor changes if one of $X$ and $Y$ is a minimal subgroup.
\end{proof}

Obviously, if a graph is $k$-connected, then the degree (valency) of any vertex is at least $k$. In view of Proposition~\ref{prop:upward}\,(iii) we make the following convention: For a finite group $G$, we say \begin{quote}
  ``$G$ satisfies the $k$-valency condition''
\end{quote} provided that any minimal subgroup of $G$ is strictly contained in at least $k$ proper subgroups.

A vertex $v$ of a connected graph $\Gamma$ is called a \emph{cut-vertex}, if removing $v$ from $\Gamma$ renders a disconnected graph, i.e. if $\{v\}$ is a separating set for $\Gamma$. For the complete graph $K_2$, we shall regard any of its two vertices as a cut-vertex. (This is not the standard convention.) Hence, $\kappa(\Gamma)=1$ if and only if there exists a cut-vertex of $\Gamma$. 

\begin{lem}\label{lem:abelian}
  Let $G$ be a finite abelian group. Then there exists a cut-vertex
  of $\Gamma(G)$ if and only if $G$ is isomorphic to one of the
  following groups  $$\mathbb{Z}_{p^3},\qquad\mathbb{Z}_{p^2}\times\mathbb{Z}_p,\qquad\mathbb{Z}_{p^2}\times\mathbb{Z}_q$$
  for some prime numbers $p$ and $q$.
\end{lem}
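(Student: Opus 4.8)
The plan is to split into the ``if'' and ``only if'' directions, with the forward (classification) direction being the substantial one. For the easy direction, I would simply exhibit a cut-vertex in each of the three listed groups. In $\mathbb{Z}_{p^3}$ the unique subgroup of order $p$ is contained in every non-trivial proper subgroup, so the graph is complete; by the paper's nonstandard convention about $K_2$ this counts as having a cut-vertex precisely when $|V(G)|=2$, i.e. exactly for $\mathbb{Z}_{p^3}$ (which has the two subgroups of orders $p$ and $p^2$). In $\mathbb{Z}_{p^2}\times\mathbb{Z}_p$ the subgroup of order $p^2$ that is cyclic and ``diagonal'' — more cleanly, take the unique subgroup $H$ of order $p$ lying in the cyclic factor $\mathbb{Z}_{p^2}$: removing the vertex of order $p^2$ containing all subgroups of order $p$, wait — I would instead directly check that removing a suitable vertex disconnects, e.g. the subgroup of order $p^2$ isolates a pendant vertex; an explicit small case analysis finishes it. For $\mathbb{Z}_{p^2}\times\mathbb{Z}_q$ the Sylow $q$-subgroup is a vertex of degree $1$ (its only non-trivial intersections are with subgroups containing it, and the unique such proper subgroup is the order-$pq$ subgroup), so that order-$pq$ subgroup is a cut-vertex. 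These are all routine verifications.

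For the ``only if'' direction I would argue contrapositively: assume $G$ is a finite abelian group not isomorphic to any of the three listed groups and show $\Gamma(G)$ has no cut-vertex, equivalently (for the relevant cases) that $\kappa(G)\geq 2$. First dispose of the groups with $|V(G)|\leq 1$ (namely $1$, $\mathbb{Z}_p$, $\mathbb{Z}_{p^2}$, $\mathbb{Z}_p\times\mathbb{Z}_p$, $\mathbb{Z}_{pq}$) — these have no cut-vertex by the convention since they are not $K_2$; and $\mathbb{Z}_p\times\mathbb{Z}_p$ and $\mathbb{Z}_{pq}$ give disconnected graphs (Theorem~A), so vacuously no cut-vertex. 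Then handle the remaining abelian groups. By Proposition~\ref{prop:upward}(iii) it suffices to produce two independent paths between any pair of minimal subgroups, and since $G$ is abelian every minimal subgroup has prime order. I would organize the remaining cases by the number of prime divisors of $|G|$ and the structure of the Sylow subgroups: (a) $|G|=p^n$ with $n\geq 4$, or $n=3$ with $G$ not $\mathbb{Z}_{p^3}$ and not $\mathbb{Z}_{p^2}\times\mathbb{Z}_p$ (so $G\cong\mathbb{Z}_p^3$); (b) $|G|$ divisible by exactly two primes, with the non-listed cases; (c) $|G|$ divisible by at least three primes. In each case the key observation is that an abelian $p$-group of rank $\geq 2$ has many subgroups of order $p$ and $p^2$ pairwise intersecting through a ``web'' of order-$p$ subgroups, giving redundancy, while for mixed-order groups one builds a path using a subgroup containing both primes and a second, independent path through another such subgroup whenever the Sylow structure is rich enough — the excluded cases being exactly those where one Sylow subgroup is cyclic of order dividing $p^2$ and the complement is too small to provide a second route.

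The main obstacle will be the bookkeeping in the two-prime case, specifically groups of the form $\mathbb{Z}_{p^a}\times\mathbb{Z}_{p^b}\times\dots\times\mathbb{Z}_{q^c}\times\dots$ where one must verify that every minimal subgroup — both those of order $p$ and those of order $q$ — sits on two independent paths. The delicate sub-case is when the $q$-part is cyclic of order $q$ (a single minimal $q$-subgroup $Q$ of degree equal to the number of proper subgroups properly containing $Q$): one needs at least two such, which forces the $p$-part to have at least two proper non-trivial subgroups, i.e. the $p$-part must not be $\mathbb{Z}_p$ and not $\mathbb{Z}_{p^2}$ — pinpointing exactly the exclusion $\mathbb{Z}_{p^2}\times\mathbb{Z}_q$ (where the $p$-part $\mathbb{Z}_{p^2}$ has a unique proper non-trivial subgroup, so $Q$ lies in only one proper subgroup $PQ$ of order $pq$ together with... actually $Q$ also lies in subgroups of order $pq$ — I'd need to recount, but the point is the count drops to make a single subgroup a cut-vertex). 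I expect the cleanest writeup enumerates, for each candidate $G$, a minimal subgroup of smallest degree and checks directly whether its degree is $\geq 2$ and whether its neighborhood is connected after its removal; the three exceptional groups are exactly those failing this, and the verification that all others pass is a finite, if slightly tedious, check organized along the Sylow structure as above.
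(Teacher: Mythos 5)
Your ``if'' direction is essentially right once the muddle around $\mathbb{Z}_{p^2}\times\mathbb{Z}_p$ is cleaned up: any subgroup of order $p$ other than the Frattini subgroup lies in exactly one proper overgroup (the elementary abelian subgroup of order $p^2$), which is therefore a cut-vertex. The genuine gap is in the ``only if'' direction, which is where all the content of the lemma sits. What you offer there is a plan, not a proof: the assertion that every abelian group outside the list admits two independent paths between any pair of minimal subgroups is deferred to a case analysis ``organized along the Sylow structure'' that is never carried out, and your own text flags the points of uncertainty (``I'd need to recount''). Since infinitely many isomorphism types must be handled, a uniform structural reason is required, and none is isolated. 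Moreover, the fallback test you close with --- check that every minimal subgroup has degree $\geq 2$ and that ``its neighborhood is connected after its removal'' --- is not an adequate criterion: degree $\geq 2$ at every minimal subgroup is the $2$-valency condition, which is necessary but not obviously sufficient for the absence of a cut-vertex; the paper establishes that equivalence for solvable groups only later (Lemma~\ref{lem:valency2}), by an argument that is itself not a formality and cannot be presupposed here.

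The paper avoids the case explosion by arguing forward from a hypothetical cut-vertex $M$. By Proposition~\ref{prop:upward}\,(ii) one may take $M$ maximal (or else $\Gamma(G)$ is complete, hence $K_2$, hence $G\cong\mathbb{Z}_{p^3}$). Choosing minimal subgroups $A$ and $B$ in different components of $\Gamma(G)-M$ forces $M=AB$, and for at least one of them, say $A$, the subgroup $AB$ must be the \emph{unique} proper subgroup strictly containing $A$. Since the overgroups of $A$ correspond to the subgroups of $G/A$, this says $G/A$ has exactly one proper non-trivial subgroup, i.e.\ $G/A\cong\mathbb{Z}_{r^2}$ for some prime $r$, which together with $|A|=p$ immediately restricts $G$ to $\mathbb{Z}_{p^3}$, $\mathbb{Z}_{p^2}\times\mathbb{Z}_p$, or $\mathbb{Z}_{p^2}\times\mathbb{Z}_q$. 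That single observation --- a cut-vertex forces some minimal subgroup to have a unique proper overgroup, which is moreover maximal --- is the idea your proposal is missing; with it, your ``finite, if slightly tedious, check'' collapses to one line. If you insist on the contrapositive route, you must actually exhibit, for each structural family of abelian groups outside the list, the two independent paths; as written, the proposal does not prove the classification.
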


\begin{proof}
  Let $G$ be a finite abelian group such that there is a cut-vertex
  $M$ in $\Gamma(G)$. By Proposition~\ref{prop:upward}\,(ii), $M$ can
  be taken as a maximal subgroup of $G$. Actually, $M$ must be a
  maximal subgroup unless $\Gamma(G)$ is a complete graph. Suppose
  that $\Gamma(G)\cong K_n$. Obviously it must be the complete graph
  on two vertices. In other words, $G$ has a unique maximal subgroup
  and a unique minimal subgroup. This is possible only if
  $G\cong\mathbb{Z}_{p^2}$ for some prime number $p$. (Observe that a
  finite group has a unique maximal subgroup if and only if it is
  isomorphic to a cyclic group of prime power order.)

  Next, suppose that $\Gamma(G)$ is not complete. Clearly, there are
  more than one minimal subgroups, i.e. $G$ is not cyclic of prime power order. Let $A$ be a minimal subgroup of $M$. As any subgroup contains
  a minimal subgroup, any component of the graph obtained by removing the vertex $M$ and all the incident edges to $M$ from $\Gamma(G)$ contains at least one minimal subgroup. Let $B$ be a minimal subgroup which is not in the same
  component with $A$. Since $(A,AB,B)$ is a path between them,
  $M=AB$. Moreover, observe that for either $A$ or $B$, say $A$, the subgroup $AB$ must be the unique proper subgroup containing $A$ strictly. Otherwise, there would be a path $(A,X,Y,B)$ between $A$ and $B$, where $X$ and $Y$ are some maximal subgroups different from $AB$ that are containing $A$ and $B$ respectively. It can be easily seen that $M=AB$ is a cut-vertex of the abelian group $G$ if and only if $G$ is isomorphic to either
  $\mathbb{Z}_{p^2}\times\mathbb{Z}_p$, or
  $\mathbb{Z}_{p^2}\times\mathbb{Z}_q$ for some prime numbers $p$ and
  $q$. 
\end{proof}

In our context, it is important to know when two minimal subgroups generate a preferably small proper subgroup. Accordingly, it is easier to describe the connectivity of groups with \emph{many} normal subgroups such as $p$-groups. On the other hand, it is known that any simple group can be generated by two elements. Let us recapitulate some basic group theoretical facts that are essential for our arguments.

The \emph{Frattini subgroup} $\Phi(G)$ of a group $G$ is the intersection of all maximal subgroups of $G$. It is well-known that the quotient of a finite $p$-group by its Frattini subgroup is elementary abelian. Moreover, $\Phi(G)$ is the minimal subgroup with this property. Therefore, $\Phi(G)=1$ if and only if $G$ is elementary abelian (see \cite[Theorem~5.1.3]{Gorenstein1980}). Notice that $\Phi(G)$ is a normal (even characteristic) subgroup of $G$. 

The $p$-core $O_p(G)$ of a finite group $G$ is the intersection of all Sylow $p$-subgroups of $G$. Like $\Phi(G)$ it is a characteristic subgroup; actually, it is the unique largest normal $p$-subgroup of $G$. In a finite solvable group $G$, the factors of every chief series are elementary abelian of prime power order. In particular, every minimal normal subgroup of $G$ is elementary abelian (see \cite[Theorem~2.4.2]{Gorenstein1980}). Hence, for a non-trivial solvable group $G$, there exists a prime $p\divides  |G|$ such that $O_p(G)$ is non-trivial.

A finite group $G$ is called \emph{supersolvable} if it possesses a normal series in which every factor group is cyclic of prime order. If a finite group is supersolvable, then every maximal subgroup is of prime index (see \cite[p.~85]{Isaacs2008}); and therefore, any maximal chain of subgroups have the same length. Let $G$ be a group of order $p_1^{\alpha_1}p_2^{\alpha_2}\dots p_k^{\alpha_k}$ where $p_i$ ($1\leq i\leq k$) are distinct prime numbers. We define the \emph{order length} of $G$ as $\ell(G):=\sum_{i=1}^k\alpha_i$. Clearly, for a supersolvable group $G$, the order length $\ell(G)$ is equal to the length of a maximal chain. Supersolvable groups form a class between the class of nilpotent groups and the class of solvable groups.

We close this section by presenting another structural result. Observe that the intersection graph of the trivial group $1$ as well as the intersection graph of $\mathbb{Z}_p$ ($p$ is a prime) are empty graphs. However, we set $|V(1)|=-1$ and $|V(\mathbb{Z}_p)|=0$ to make the statement of the following Proposition easier. Moreover, we adopt the following convention $$ \kappa(1)=-2,\qquad \kappa(\mathbb{Z}_p)=-1,\qquad \kappa(\mathbb{Z}_{p^2})=\kappa(K_1)=0. $$
Notice that this is in conformity with the our previous convention that $\kappa(K_n)=n-1$. 

\begin{prop}
  Let $G$ be a finite group and $N$ be a normal subgroup of $G$. If
  $G/N$ is $k$-connected, then $G$ is $(k+x-1)$-connected where $x$ is
  the length of the series $$1<N_1<N_2<\dots<N_x=N $$ such that
  $N_i\triangleleft G$ for each $1\leq i\leq x$. In particular,
  $\kappa(G/N)\leq \kappa(G)$.
\end{prop}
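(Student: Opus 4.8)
The plan is to prove the statement by induction on $x$, so that the whole claim reduces to the case $x=1$, i.e.\ to showing that if $N\triangleleft G$ is a \emph{minimal} normal subgroup (or at least admits no intermediate $G$-normal subgroup) and $G/N$ is $k$-connected, then $G$ is $k$-connected. Granting this, if $1<N_1<\dots<N_x=N$ is a chain of normal subgroups of $G$, then applying the $x=1$ case repeatedly along the tower $G/N \;\to\; G/N_{x-1} \;\to\; \dots \;\to\; G/N_1 \;\to\; G$ raises the connectivity by one at each step (the key point being that $N_i/N_{i-1}$ need not be minimal normal in $G/N_{i-1}$; we only need \emph{a} normal subgroup strictly between, which the chain supplies), yielding $(k+x-1)$-connectivity of $G$. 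The ``in particular'' clause $\kappa(G/N)\le\kappa(G)$ is then immediate by taking any such chain of length $x\ge 1$ (one exists whenever $N\neq 1$; and if $N=1$ the statement is trivial), together with the conventions fixed just before the Proposition so that degenerate cases like $G/N\cong\mathbb{Z}_{p^2}$ or $K_1$ are handled numerically.

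For the base step I would use Proposition~\ref{prop:upward}: it suffices to rule out an upward closed separating set $\mathcal{S}$ of $\Gamma(G)$ with $|\mathcal{S}|<k$ (here I should first note that the vertex-count hypothesis $|V(G)|>k$ follows since $|V(G)|\ge |V(G/N)|+1 > k$, the ``$+1$'' coming from $N$ itself being a proper non-trivial subgroup — this is exactly why the series must have length $\ge 1$). Given such an $\mathcal{S}$, consider first whether $N\in\mathcal{S}$. If $N\notin\mathcal{S}$, then every vertex of $\Gamma(G)$ not in $\mathcal{S}$ is joined by a path (through $N$, using that every proper non-trivial $H\le G$ with $H\cap N=1$ is still adjacent to $N$ only if $H\cap N\neq1$ — so I instead argue that the vertices containing $N$, together with $N$, form a clique, and that $\mathcal{S}$ restricted to this clique has size $<k$, while $|V(G/N)|>k$ forces a contradiction via the correspondence theorem). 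If $N\in\mathcal{S}$, then $\mathcal{S}\setminus\{N\}$ has size $<k-1$; I would pull $\mathcal{S}\setminus\{N\}$ back to $\Gamma(G/N)$ via the subgroup correspondence (a proper non-trivial subgroup of $G$ containing $N$ corresponds to a proper non-trivial subgroup of $G/N$, since $N$ minimal rules out $N$ itself giving the trivial quotient-subgroup only when... — here is where minimality of $N$ among $G$-normal subgroups, or rather the chain structure, matters), observe that two such subgroups intersect non-trivially in $G$ iff their images intersect non-trivially in $G/N$ (because both contain $N\neq1$: automatically adjacent!), hence the subgraph of $\Gamma(G)$ induced on subgroups containing $N$ is a \emph{complete} graph on $|V(G/N)|$ vertices, and removing $<k$ vertices from a complete graph on $>k$ vertices cannot disconnect it — so the separation must involve vertices not containing $N$, and I route those through $N$ to the big clique, contradicting that $\mathcal{S}$ separates.

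The main obstacle I anticipate is organizing the interaction between the two families of vertices — those containing $N$ (which form a clique once $N\neq1$, since any two such subgroups meet in at least $N$) and those not containing $N$ (each of which is still adjacent to \emph{some} vertex of the clique, namely to $HN$, or at worst I must check $H$ meets $N$; if $H\cap N=1$ then $H$ attaches to the clique via $HN\supsetneq N$ provided $HN$ is still proper, and if $HN=G$ I need a separate argument, e.g.\ that then $|G:N|$ is small and one falls into a degenerate case already covered by the conventions). Handling the boundary cases where $HN=G$ for every such $H$, and where $G/N$ is one of the ``small'' groups with $\kappa\le 0$, is the delicate bookkeeping; everything else is the clean observation that quotient-by-$N$ is a graph quotient that collapses the $N$-clique to a point and that re-expanding a point into a clique of the right size raises connectivity by exactly the clique size minus one, i.e.\ by $x-1$ when done $x$ times. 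I would make sure the write-up treats $N=1$ (vacuous), the inductive step (trivial telescoping), and the base step (the real content) as three clearly separated pieces.
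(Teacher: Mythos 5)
The central gap is structural: your base case ($x=1$) asserts only that $k$-connectivity is \emph{preserved} in passing from $G/N$ to $G$, yet you then claim that iterating this statement along the tower $G/N\to G/N_{x-1}\to\dots\to G/N_1\to G$ ``raises the connectivity by one at each step.'' It does not. Composing the implication ``$k$-connected quotient $\Rightarrow$ $k$-connected group'' $x$ times yields only that $G$ is $k$-connected, never $(k+x-1)$-connected; to telescope you would need the strictly stronger step ``$G/N$ $k$-connected and $N\neq 1$ normal $\Rightarrow$ $G$ $(k+1)$-connected,'' which is neither what your $x=1$ case says nor what the Proposition asserts for $x=1$, and which you nowhere prove. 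The surplus $x-1$ cannot be recovered by iterating a one-step lemma: in the paper it comes from exhibiting, for two minimal subgroups $A$ and $B$, the $x-1$ \emph{extra} paths $(A,N_iA,N_iB,B)$, $1\leq i\leq x-1$, routed through the intermediate normal subgroups, kept independent from the $k$ paths obtained by lifting $k$ independent paths of $\Gamma(G/N)$ (whose inner vertices all contain $N$) via the Correspondence Theorem, and then invoking Proposition~\ref{prop:upward}\,(iii). This interaction between the chain and the quotient is invisible to your inductive scheme.

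The base case itself is also incomplete. Your clique argument shows that the set of vertices containing $N$ (a clique of size $|V(G/N)|+1>k$) remains connected after deleting fewer than $k$ vertices, but connectivity of $\Gamma(G)$ additionally requires every surviving vertex $H$ with $N\not\leq H$ to reach that clique, and the case $H\cap N=1$ with $HN=G$ is precisely where this fails: in the disconnected groups $G=N\rtimes A$ of Theorem~A the complements of $N$ are adjacent to no subgroup containing $N$. Deferring this to ``a degenerate case already covered by the conventions'' is not an argument, since the conventions merely assign numerical values of $\kappa$ to a few very small groups; you would have to show that whenever such an $H$ exists and cannot be routed to the clique, the hypothesis forces $\kappa(G/N)$ to be small enough that the claim is vacuous. (You also catch yourself mid-sentence on the false assertion that such an $H$ is adjacent to $N$.) The cleanest repair is to drop the induction entirely and argue as the paper does, directly constructing $k+x-1$ independent paths between an arbitrary pair of minimal subgroups; the degenerate values $\kappa(G/N)\in\{-2,-1\}$ are then absorbed into the same construction.
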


\begin{proof}
  Let $G$ and $N$ be as in the hypothesis of the Proposition. Let $A$
  and $B$ be two minimal subgroups of $G$. If $\kappa(G/N)=-2$, then
  there is a normal series $$ 1<N_1<N_2<\dots<N_x=G, $$ and we may
  easily form $x-2$ independent paths $\gamma_i=(A,N_iA,N_iB,B)$,
  $1\leq i\leq x-2$, between $A$ and $B$. (In case of a possible
  coincidence of the vertices we can safely shorten the paths.) A
  similar argument shows that we may construct $x-1$ independent paths
  if $\kappa(G/N)=-1$.

  Next suppose that $\kappa(G/N)\geq 0$, i.e. $|V(G/N)|\geq 1$. By the
  Correspondence Theorem there is a bijection between the subgroups of
  the quotient group $G/N$ and the subgroups of $G$ that are
  containing $N$. Observe that $NA$ and $NB$ correspond to some
  subgroups of $G/N$ that are either trivial or minimal. Then, as
  $G/N$ is $k$-connected by the assumption, we may construct at least
  $k$ additional independent paths $\gamma_j=(A,\dots,B)$,
  $x\leq j\leq k+x-1$, such that the inner vertices represents some proper subgroups of $G$ containing $N$.
\end{proof}

\begin{cor}\label{cor:order}
  Let $G$ be a supersolvable group with $\ell:=\ell(G)$. Then
  $\kappa(G)\geq \ell-3$. In particular, all $p$-groups of order
  $> p^{\alpha}$ are $(\alpha - 2)$-connected. \qed
\end{cor}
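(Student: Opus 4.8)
The plan is to feed the previous Proposition the extreme choice $N=G$, so that the quotient $G/N$ is trivial, and then to read off the conclusion using the convention $\kappa(1)=-2$. With $N=G$ the Proposition says that $G$ is $(k+x-1)$-connected whenever $G/N=1$ is ``$k$-connected'', i.e.\ for $k=-2$; here $x$ is the length of any series $1<N_1<\dots<N_x=G$ in which every $N_i$ is normal in $G$. Hence $\kappa(G)\ge x-3$ for every such $G$-invariant series, and it only remains to produce one with $x=\ell$, where $\ell=\ell(G)$.

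For that I would invoke supersolvability: a finite group is supersolvable precisely when every factor of one of its chief series is cyclic of prime order. Since $G$ is supersolvable we may fix a chief series $1=N_0<N_1<\dots<N_x=G$; each $N_i$ is normal in $G$ and each index $|N_i/N_{i-1}|$ is a prime. As the product of these indices equals $|G|=\prod_{i=1}^{k}p_i^{\alpha_i}$, the chief series has exactly $x=\sum_{i=1}^{k}\alpha_i=\ell(G)$ factors. Substituting $x=\ell$ into the inequality from the previous paragraph gives $\kappa(G)\ge\ell-3$.

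The ``in particular'' clause is then immediate: a $p$-group is nilpotent, hence supersolvable, and if $|G|=p^{\beta}>p^{\alpha}$ then $\beta\ge\alpha+1$ and $\ell(G)=\beta$, so the first part yields $\kappa(G)\ge\beta-3\ge\alpha-2$; that is, $\Gamma(G)$ is $(\alpha-2)$-connected.

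Once the Proposition is in hand this is a short deduction, so I do not expect a genuine obstacle. The only point needing a little care is the bookkeeping around the degenerate conventions $\kappa(1)=-2$ and $|V(1)|=-1$ (to make sure the Proposition really licenses the step $N=G$), together with the recollection that ``supersolvable'' is equivalent to ``all chief factors have prime order'', which is exactly what pins the length of the chief series to $\ell(G)$ rather than to something smaller.
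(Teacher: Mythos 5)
Your proposal is correct and matches the paper's intent exactly: the corollary is stated with an immediate \qed precisely because it follows from the preceding Proposition by taking $N=G$ (the case $\kappa(G/N)=-2$ handled explicitly in that Proposition's proof) together with the fact that a supersolvable group has a $G$-invariant series with prime-order factors, which forces $x=\ell(G)$. The bookkeeping with the conventions and the ``in particular'' deduction are both handled correctly.
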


\subsection*{Non-simple groups}

\begin{proof}[Proof of Theorem A]
Let $G$ be a finite non-simple group and $N$ be a minimal normal subgroup of $G$. Suppose that $\Gamma(G)$ is not connected. Let $A$ be a subgroup of $G$ which does not lie in the component of $N$ in $\Gamma(G)$. Then $NA=G$, as otherwise, $(N,NA,A)$ would be a path between $N$ and $A$. Also $N\cap A=1$, as otherwise, $N$ and $A$ would be linked via the subgroup $N\cap A$. Therefore $|G:N|=|A|$. Since this equality holds for every subgroup that does not lie in the component containing $N$, it holds also for any non-trivial subgroup of $A$. As a consequence $|G:N|=|A|=q$ is a prime number. Moreover, $A$ is a maximal subgroup of $G$. To see this, suppose that there exists a proper subgroup $B$ containing $A$. Since $B$ does not lie in the same component with $N$, we have $|B|=q$, i.e. $B$ coincides with $A$.

Let $Q$ be a Sylow $q$-subgroup of $G$ containing $A$. Since $A$ is a maximal subgroup, either $G=Q$ or $A=Q$. In the first case since $N$ is a minimal normal subgroup and $G$ is a $q$-group, the order of $N$ is $q$. As $N$ and $A$ are distinct subgroups of same order $q$ and as $G=NA$, we see that $G\cong \mathbb{Z}_q\times\mathbb{Z}_q$. Clearly, $\Gamma(\mathbb{Z}_q\times\mathbb{Z}_q)$ is not connected.


In the latter case since $G$ is not a $q$-group and since $G=NA$, there must be a prime $p$ dividing $|N|$ and different from $q$. We want to show that $N$ is a $p$-group. Suppose contrarily that $N$ is not a $p$-group. Let $P$ be a Sylow $p$-subgroup of $N$ and $T=N_G(P)$. (Notice that $G\neq T$, as $N$ is a minimal normal subgroup.) By the Frattini argument $G=NT$ which, in turn, implies that $q\divides |T|$. Since $A$ is a Sylow $q$-subgroup, some conjugate of $T$ contains $A$. However, this contradicts with the maximality of $A$. Therefore, $N$ is a $p$-subgroup. Further, $N$ must be an elementary abelian subgroup since it is a minimal normal subgroup.



Consider the normalizer $N_G(A)$. Since $A$ is a maximal subgroup, there are two possibilities. If $A$ is a normal subgroup of $G$, then $A$ centralizes $N$; hence, $|N|=p$ and $G\cong\mathbb{Z}_p\times\mathbb{Z}_q$. Clearly, $\Gamma(\mathbb{Z}_p\times\mathbb{Z}_q)$ is not connected. And if $A$ is self-normalizing, $G$ is a group described as in the second part of Theorem~A.
To conclude the proof it is enough to show that $\Gamma(G)$ is not connected in such a case.

Let $H$ be a proper non-trivial subgroup of $G$. We want to show that $H$ is either a subgroup of the unique (normal) Sylow $p$-subgroup $N$ of $G$ or it is a Sylow $q$-subgroup. Obviously, $\Gamma(G)$ is not connected if this is the case. Suppose contrarily, $H$ is neither a $p$-subgroup nor a $q$-subgroup. Then $q\divides |H|$ as $|G|=|NA|=p^{\alpha}q$ for some integer $\alpha\geq 1$. Hence, $H$ contains a conjugate of $A$ and we may suppose that $H$ contains $A$ by replacing $H$ with some conjugate of it if necessary. Then $NH=G$ and it follows that $N\cap H\triangleleft G$. (Notice that $N\cap H$ is normalized by $N$ as $N$ is an abelian subgroup and $N\cap H$ is normalized by $H$ as $N$ is a normal subgroup.) Since $N$ is a minimal normal subgroup, either $N\cap H=1$ or $N\cap H=N$ yielding either $|H|=q$ or $H=G$. However, this contradicts with the assumption that $H$ is a proper subgroup which is not a $q$-subgroup. 

\end{proof}

Notice that for a finite non-simple group $G$, the connectivity of $G$ is $1$ if and only if $G$ satisfies the $1$-valency condition.

\subsection*{Solvable groups}

\begin{lem}\label{lem:valency2}
  Let $G$ be a finite solvable group. Then $\kappa(G)\geq 2$ if and only
  if $G$ satisfies the $2$-valency condition.
\end{lem}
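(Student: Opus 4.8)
The plan is to prove the two implications separately, and the forward one I would dispatch first because it is immediate: a $k$-connected graph has minimum degree at least $k$, and the neighbours of a minimal subgroup $A$ in $\Gamma(G)$ are exactly the proper subgroups properly containing $A$ (for $A$ minimal, $H\cap A\ne 1$ forces $A\le H$), so $\kappa(G)\ge 2$ immediately gives the $2$-valency condition. For the converse I would assume $G$ solvable with the $2$-valency condition and aim at $\kappa(G)\ge 2$. First, the $2$-valency condition implies the $1$-valency condition, and none of the groups in Theorem~A satisfies it (each of them has a minimal subgroup which is also maximal), so $\Gamma(G)$ is connected; moreover $|V(G)|\ge 3$. (The cases $|G|\le p$ are degenerate and I would set them aside.) Then I would argue by contradiction: if $\kappa(G)<2$, Proposition~\ref{prop:upward}(ii) yields a maximal subgroup $M$ that is a cut-vertex. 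Fix two distinct components $\mathcal C,\mathcal D$ of $\Gamma(G)\setminus\{M\}$ and pick minimal subgroups $A\in\mathcal C$, $B\in\mathcal D$ (possible, since every subgroup contains a minimal one and $M$ is not minimal). By the $2$-valency condition choose proper subgroups $L\ne M\ne L'$ with $A<L$ and $B<L'$; then $L\in\mathcal C$ and $L'\in\mathcal D$, so $L\cap L'=1$ and $\langle L,L'\rangle\in\{M,G\}$, since a proper value $\ne M$ would reconnect the two components.

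Next I would bring in solvability by choosing a minimal normal subgroup $N$ of $G$, which is elementary abelian. If $N=G$ then $G$ is elementary abelian of rank at least $3$ (ranks $\le 2$ fail the $2$-valency condition) and $\Gamma(G)$ is plainly $2$-connected, a contradiction; so $N$ is a vertex, and I would split on whether $N\le M$. If \emph{some} minimal normal $N$ is not contained in $M$, then $G=NM$ and $N\cap M\triangleleft G$ (as $N$ is abelian), so minimality gives $N\cap M=1$ and $G=N\rtimes M$. Let $\mathcal E$ be the component of $N$ in $\Gamma(G)\setminus\{M\}$; every minimal subgroup of $N$ lies in $\mathcal E$. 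For a minimal $B_0\not\le N$ one has $NB_0\supsetneq N$ and $NB_0\ne M$ (it contains $N\not\le M$); if $NB_0$ is proper it is a vertex $\ne M$ adjacent to both $N$ and $B_0$, so $B_0\in\mathcal E$; otherwise $NB_0=G$, whence $G/N\cong B_0/(B_0\cap N)$ has prime order, $|M|=|G:N|=|B_0|=:q$, and with $|N|=p^a$ we get $|G|=p^aq$ with $N$ the unique Sylow $p$-subgroup. Then a $2$-valency witness $C\ne M$ for $B_0$ has $p\mid|C|$, hence $C$ contains an order-$p$ subgroup $A_0\le N$, and the edges $A_0$–$N$, $A_0$–$C$, $C$–$B_0$ (none involving $M$) again force $B_0\in\mathcal E$. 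So all minimal subgroups lie in $\mathcal E$, $\Gamma(G)\setminus\{M\}$ has one component, and $M$ is not a cut-vertex — contradiction.

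The remaining case, where \emph{every} minimal normal subgroup of $G$ is contained in $M$, is where I expect the real work. Here, with $N\le M$ fixed and $L,L'$ as above, I would analyse $NL$ and $NL'$: since $N\triangleleft G$, each of these equals $M$, equals $G$, or is a proper subgroup, and in that last case it is a vertex adjacent to both $N$ and its witness, which forces $N$ into the component of $A$ (respectively of $B$); two such conclusions would give the contradiction $\mathcal C=\mathcal D$, so at most one side can behave so. Excluding the rest needs a finer case split, the awkward configurations being those forcing $NL=G$ or $NL'=G$ — which, together with $N\le M$ and the (prime-power) index of a maximal subgroup of a solvable group, pin $M$ down to be small, typically a normal maximal subgroup of prime index — and those with $\langle L,L'\rangle=G$ and both $L,L'$ of small index. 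In these I would construct the two independent paths directly (equivalently, a path from $A$ to $B$ avoiding $M$), exploiting that any vertex $H\not\le M$ satisfies $HM=G$ and $H\cap M\triangleleft H$ when $M\triangleleft G$, and using Corollary~\ref{cor:order} to clear away the $p$-groups and the supersolvable groups of order length at least $5$, so that only a handful of small cases per prime survive. The main obstacle is exactly this last case: once the socle of $G$ lies inside $M$ there is no normal complement to leverage, and the required connections have to be exhibited by a hands-on argument rather than by the structural shortcut available when $N\not\le M$.
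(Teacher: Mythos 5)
Your forward implication is correct, and the reduction of the converse to a maximal cut\nobreakdash-vertex $M$ via Proposition~\ref{prop:upward}(ii) is sound. The branch in which some minimal normal subgroup $N$ satisfies $N\not\le M$ is also carried out correctly: the complement argument and the $|G|=p^{a}q$ sub-case both work. The genuine gap is that the entire remaining branch --- every minimal normal subgroup contained in $M$ --- is announced rather than proved. Your analysis of $NL$ and $NL'$ only excludes the configuration where both are proper and different from $M$; you are then left with the cases $NL\in\{M,G\}$ or $NL'\in\{M,G\}$ (note that you list only $NL=G$ as ``awkward'', but $NL=M$ is equally unresolved, since it places $N$ and $L$ inside $M$ without producing any path avoiding $M$), and for these you say you ``would construct the two independent paths directly'' after a ``finer case split'', without exhibiting either the split or the paths. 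That is precisely where the content of the lemma lies, and it is not routine. For example, when the minimal normal subgroup $N$ is elementary abelian of prime index $q$ and $A$ has order $q$, every proper subgroup $H$ properly containing $A$ has $O_p(H)=1$ (because $O_p(H)\trianglelefteq\langle H,N\rangle=G$ and $N$ is minimal normal), whence $A\trianglelefteq H$ and $H\le N_G(A)<G$; this forces $N$ to have rank at least $3$ and lets one route the two paths through $N_G(A)$ and through $N$. Nothing in your sketch produces this kind of information.

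For comparison, the paper does not argue by contradiction at all: it directly builds two independent paths between arbitrary minimal subgroups $A_1,A_2$, choosing a minimal normal subgroup $N$, a maximal subgroup $M$ of prime index, and $2$-valency witnesses $H_i,K_i$ for $A_i$ subject to the crucial normalization $H_i\ne NA_i$. It then splits on whether $[G:N]$ is prime. When it is not, $NA_1$ and $NA_2$ are automatically proper and the paths $(A_1,NA_1,NA_2,A_2)$ and $(A_1,H_1,M,H_2,A_2)$ (with small adjustments when $NA_i=M$) finish the argument; when $[G:N]$ is prime, the delicate coprime sub-case is handled by the $N_G(A_1)$ argument sketched above. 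Your cut-vertex framing is a legitimate alternative shell, but as it stands the proof is incomplete exactly on the hard case, so it does not yet establish the lemma.
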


\begin{proof}
  Sufficiency is obvious. Let $G$ be a finite solvable group
  satisfying the $2$-valency condition. We want to show that there
  exist at least two independent paths between any pair of minimal
  subgroups $A_1$ and $A_2$. If $\langle A_1,A_2\rangle$ is a second
  maximal subgroup, then clearly there are two independent paths
  between them. Thus, for the rest we assume $\langle A_1,A_2\rangle$
  is either $G$ or a maximal subgroup. Let $M$ be a maximal subgroup
  of prime index and $N$ be a minimal normal subgroup. Notice that
  since $G$ is solvable, there exist a subgroup of prime index and
  minimal normal subgroups are elementary abelian. Further, let $A_1<H_1,K_1$
  and $A_2<H_2,K_2$ such that $NA_1\neq H_1$ and $NA_2\neq H_2$.

  \emph{Case I:} Suppose that $N$ is of prime index in $G$ and take
  $M=N$.

  \emph{Case I (a):} $A_1,A_2<M=N$. Obviously $(A_1,M,A_2)$ is a path
  and $\langle A_1,A_2\rangle=M\cong\mathbb{Z}_p\times\mathbb{Z}_p$.
  And the order of $G$ is either $p^3$ or $p^2q$. By the product
  formula, $(A_1,H_1,H_2,A_2)$ is also a path and independent from the
  first.

  \emph{Case I (b):} $M$ and $A_1$ are distinct $p$-groups. Then $G$
  is also a $p$-group and in turn $|G|=p^2$ since $M$ must be a cyclic
  group of prime order. However, intersection graph of a group of
  order $p^2$ or $pq$ consists of isolated vertices and $G$ does not
  satisfy $2$-valency condition in that case.

  \emph{Case I (c):} $M$ is an elementary abelian $p$-group of rank
  $n\geq 2$ and $A_1\cong \mathbb{Z}_q$. In particular
  $|G|=p^nq$. Observe that $A_1\ntriangleleft G$, as otherwise, $G$
  would be an abelian group contradicting the fact that $M$ is a
  minimal normal subgroup. Moreover, $O_p(H_1)$ and
  $O_p(K_1)$ are trivial (again this is because $M$ is a
  minimal normal subgroup) and this in turn implies
  $H_1,K_1\trianglelefteq N_G(A_1)<G$. (Notice that this
  implies $n\geq 3$). Hence, we may assume
  $A_1<H_1<K_1=N_G(A_1)$. If $A_2<M$, then we have the two
  independent paths $(A_1,H_1,M,A_2)$ and $(A_1,K_1,T,A_2)$ where $T$
  is a subgroup of order $p^{n-1}$ containing $A_2$. And if $A_2$ is a
  conjugate of $A_1$, then $(A_1,H_1,M,H_2,A_2)$ and
  $(A_1,K_1,T,K_2,A_2)$ are two independent paths between $A_1$ and
  $A_2$ where $H_2<K_2$.

  \emph{Case II:} Suppose that $[G:N]$ is not prime. Then
  $NA_1\neq G$, $NA_2\neq G$. If one of $NA_1$ and $NA_2$ coincides
  with $M$, say $NA_1$, then we may take $(A_1,H_1,NA_2,A_2)$ and
  $(A_1,K_1,M,H_2,A_2)$ as independent paths. If both $NA_1$ and
  $NA_2$ coincides with $M$, then we may take $(A_1,M,A_2)$ and
  $(A_1,H_1,N,H_2,A_2)$. Finally, if $NA_1\neq M$ and $NA_2\neq M$,
  then $(A_1,NA_1,NA_2,A_2)$ and $(A_1,H_1,M,H_2,A_2)$ are two
  independent paths between $A_1$ and $A_2$.
\end{proof}

\begin{lem}\label{lem:pgroup2}
  Let $G$ be a finite $p$-group. Then $\kappa(G)<2$ if and only if
  \begin{enumerate}
  \item $|G|=p^{\alpha}, \; 0\leq\alpha\leq 2,$
  \item
    $|G|=p^3\; \text{ except }\; G\cong Q_8\, \text{ or }\, G\cong
    \mathbb{Z}_p\times\mathbb{Z}_p\times\mathbb{Z}_p$
  \end{enumerate}
  In particular, all $p$-groups of order $>p^3$ are $2$-connected.
\end{lem}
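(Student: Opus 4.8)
The plan is to reduce everything to the $2$-valency condition. Since a $p$-group is solvable, Lemma~\ref{lem:valency2} tells us that $\kappa(G)\geq 2$ if and only if every minimal subgroup of $G$ is strictly contained in at least two proper subgroups, so the whole lemma becomes a statement about the $2$-valency condition.

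First I would dispose of the extreme orders. The cases $|G|\leq p^2$ are immediate: then $\Gamma(G)$ has at most one vertex or has no edges at all, so $\kappa(G)<2$. For $|G|=p^n$ with $n\geq 4$: given a minimal subgroup $A$, in a $p$-group every subgroup of order $<p^n$ is properly contained in a subgroup $p$ times larger, so we obtain $A<B<C$ with $|B|=p^2$ and $|C|=p^3$; as $n\geq 4$ both $B$ and $C$ are proper, so the $2$-valency condition holds and $\kappa(G)\geq 2$. (For $n>4$ one may instead quote Corollary~\ref{cor:order}.) This already yields the final ``in particular'' assertion.

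The real content is the case $|G|=p^3$. Here every proper non-trivial subgroup has order $p$ or $p^2$, a maximal subgroup has order $p^2$, and so the proper subgroups strictly containing a minimal subgroup $A$ are precisely the maximal subgroups containing $A$; thus $G$ satisfies the $2$-valency condition if and only if every minimal subgroup lies in at least two maximal subgroups. Now I would bring in the Frattini subgroup: every maximal subgroup contains $\Phi(G)$, and $|\Phi(G)|\in\{1,p,p^2\}$. If $A\not\leq\Phi(G)$, then $A\Phi(G)$ is a subgroup of order $p\,|\Phi(G)|$ and every maximal subgroup containing $A$ contains $A\Phi(G)$; hence when $|\Phi(G)|\geq p$ such an $A$ sits in a unique maximal subgroup and the $2$-valency condition fails. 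Therefore, if $G$ satisfies the $2$-valency condition and $\Phi(G)\neq 1$, then every minimal subgroup lies in $\Phi(G)$, which forces $G$ to have a unique minimal subgroup (if $|\Phi(G)|=p$ then $\Phi(G)$ is the only minimal subgroup it contains; if $|\Phi(G)|=p^2$ then $G/\Phi(G)\cong\mathbb{Z}_p$, so $G$ is cyclic). A $p$-group with a unique minimal subgroup is cyclic or generalized quaternion (see \cite{Gorenstein1980}); of order $p^3$ these are $\mathbb{Z}_{p^3}$ and $Q_8$, and one checks directly that $\mathbb{Z}_{p^3}$ fails the $2$-valency condition (its minimal subgroup lies in the unique maximal subgroup only) while $Q_8$ satisfies it ($\Gamma(Q_8)\cong K_4$). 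Finally, if $\Phi(G)=1$ then $G\cong\mathbb{Z}_p\times\mathbb{Z}_p\times\mathbb{Z}_p$, and each subgroup of order $p$ lies in $p+1\geq 3$ subgroups of order $p^2$, so the $2$-valency condition holds. Collecting the cases, for $|G|=p^3$ the $2$-valency condition holds exactly for $G\cong Q_8$ and $G\cong\mathbb{Z}_p\times\mathbb{Z}_p\times\mathbb{Z}_p$, and Lemma~\ref{lem:valency2} turns this into the asserted equivalence.

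I expect the main obstacle to be purely organizational — the case analysis for $|G|=p^3$, in particular keeping track of which minimal subgroups lie in $\Phi(G)$ and isolating $Q_8$ as the unique non-abelian exception. One can sidestep the citation about $p$-groups with a unique minimal subgroup by instead running through the five isomorphism types of groups of order $p^3$ and verifying the $2$-valency condition case by case; that route is elementary but longer, so I would prefer the Frattini-subgroup argument above.
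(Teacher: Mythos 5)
Your proposal is correct and follows essentially the same route as the paper: reduce to the $2$-valency condition via Lemma~\ref{lem:valency2}, dispose of orders $\leq p^2$ and $\geq p^4$ by exhibiting (or failing to exhibit) a chain $A<B<C$, and settle $|G|=p^3$ by the Frattini-subgroup dichotomy, with $\mathbb{Z}_{p^3}$, $Q_8$ and the elementary abelian group as the boundary cases. The only cosmetic difference is that you organize the $p^3$ analysis around whether all minimal subgroups lie in $\Phi(G)$ and explicitly invoke the cyclic-or-generalized-quaternion classification, which the paper uses implicitly.
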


\begin{proof}
  By Lemma~\ref{lem:valency2}, all we need to do is to determine
  $p$-groups which do not satisfy the $2$-valency condition. Clearly,
  intersection graph of a group of order $p^{\alpha}$,
  $0\leq \alpha \leq 2$, is either empty graph or consists of isolated
  vertices. Hence $2$-valency condition does not hold for those
  groups.

  Suppose that $|G|=p^3$. If $G$ has a unique maximal subgroup, then
  $G\cong\mathbb{Z}_{p^3}$ and $\Gamma(G)\cong K_2$. So it is not
  $2$-connected in this case. If $G$ has more than one maximal
  subgroup and $\Phi:=\Phi(G)$ is non-trivial, then either $G$ has a
  unique minimal subgroup (which is $\Phi$) or there are minimal
  subgroups different from $\Phi$. In the first case, $G\cong Q_8$ and
  $\Gamma(Q_8)\cong K_4$. That is, $G$ is $3$-connected. In the latter
  case, $\Phi A$ is a maximal subgroup of $G$ and it is the unique
  subgroup of order $p^2$ containing $A$, as all the maximal subgroups
  contain $\Phi$. If $\Phi$ is trivial, then $G$ is elementary abelian
  and by the Correspondence Theorem any minimal (normal) subgroup is
  contained in $p+1$ maximal subgroups. Therefore, $G$ is $2$-connected in
  this case, as the $2$-valency condition holds.

  Suppose that $|G|=p^{\alpha}$, $\alpha>3$. Then any minimal subgroup
  of $G$ is contained in a subgroup of order $p^2$ and by a subgroup
  of order $p^3$. Hence $G$ satisfies the $2$-valency condition.
\end{proof}

\begin{lem}\label{lem:p2q}
  Let $G$ be a group of order $p^2q$ with a Sylow $p$-subgroup $P$.
  Then $\kappa(G)<2$ if and only if one of the following holds.
  \begin{enumerate}
  \item $P\cong \mathbb{Z}_{p^2}$.
  \item $P\cong \mathbb{Z}_p\times\mathbb{Z}_p,\, P\triangleleft G,$ and there exists a
    non-normal subgroup of $G$ of order $p$.
  \end{enumerate}
\end{lem}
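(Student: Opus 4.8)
The plan is to reduce everything to Lemma~\ref{lem:valency2}. Since $|G|=p^2q$ with $p\neq q$ two primes, $G$ is solvable, so $\kappa(G)<2$ if and only if the $2$-valency condition fails, i.e. if and only if some minimal subgroup of $G$ is strictly contained in at most one proper subgroup. Thus for each prescribed structure of $P$ I only have to decide whether such a ``bad'' minimal subgroup exists. The two tools I will lean on are the well-known elementary fact that a group of order $p^2q$ has a normal Sylow subgroup (so at least one of $P$ and a Sylow $q$-subgroup $Q$ is normal in $G$), and the Correspondence Theorem, used to count the subgroups of order $pq$ lying above a fixed normal subgroup of order $p$ or $q$. (If $p=q$ then $|G|=p^3$ and the statement is subsumed by Lemma~\ref{lem:pgroup2}, so I assume $p\neq q$.)

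First I would settle the case $P\cong\mathbb{Z}_{p^2}$, proving $\kappa(G)<2$ unconditionally, which matches alternative~(1). Pick a Sylow $q$-subgroup $Q$; it is minimal, and any proper subgroup strictly containing it has order $pq$. If $Q\triangleleft G$, then $G/Q\cong P\cong\mathbb{Z}_{p^2}$ has a unique subgroup of order $p$, so $Q$ lies in exactly one subgroup of order $pq$. If instead $P\triangleleft G$, let $Z$ be the unique subgroup of order $p$ of $P$; any $H$ with $Q\le H$ and $|H|=pq$ has $H\cap P$ of order $p$, hence $H\cap P=Z$ and $H=ZQ$, again unique. Either way $Q$ has exactly one proper overgroup, so the $2$-valency condition fails.

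Next I would handle $P\cong\mathbb{Z}_p\times\mathbb{Z}_p$, splitting on whether $P$ is normal. If $P\ntriangleleft G$ then $Q\triangleleft G$: for a minimal subgroup $A\le P$ of order $p$ the product $AQ$ is a subgroup of order $pq$, a second overgroup besides a Sylow $p$-subgroup containing $A$; and $Q$ has the $p+1\ge 3$ overgroups corresponding to the subgroups of order $p$ of $G/Q\cong P$. Hence $G$ satisfies the $2$-valency condition and $\kappa(G)\ge 2$, so such groups do not occur in the list. Now assume $P\triangleleft G$, so $P$ is the unique Sylow $p$-subgroup and every subgroup of order $p$ lies in $P$. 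For the ``if'' part: if $A\le P$ has order $p$ and $A\ntriangleleft G$, then $P\le N_G(A)\neq G$ forces $N_G(A)=P$; any $H$ with $A\le H$ and $|H|=pq$ satisfies $H\cap P=A$ (as $H\cap P\triangleleft H$ has order $p$), hence $A\triangleleft H$ and $H\le N_G(A)=P$ --- impossible. So $A$ has only the overgroup $P$, and $2$-valency fails. For the ``only if'' part I would argue the contrapositive: if every subgroup of order $p$ is normal in $G$, then each such $A$ has, besides $P$, the preimage in $G$ of a subgroup of order $q$ of $G/A$ as an overgroup, so at least two; and each minimal subgroup $B$ of order $q$ has the $p+1\ge 3$ distinct subgroups $AB$ (one for each subgroup $A\le P$ of order $p$; distinct since $AB\cap P=A$) as overgroups. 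Thus $2$-valency holds and $\kappa(G)\ge 2$.

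The crux --- where I expect the real work to be --- is the subcase $P\cong\mathbb{Z}_p\times\mathbb{Z}_p$ with $P\triangleleft G$: one must correctly identify $N_G(A)=P$ for a non-normal order-$p$ subgroup $A$ (to kill all would-be overgroups of order $pq$), and, in the converse direction, check that normality of all order-$p$ subgroups genuinely produces two distinct proper overgroups for \emph{both} a minimal subgroup of order $p$ and one of order $q$. By contrast, the $P\cong\mathbb{Z}_{p^2}$ case and the ``$P$ not normal'' reduction are routine once one inspects a Sylow $q$-subgroup and invokes the normal-Sylow fact; assembling the four pieces then yields the stated biconditional.
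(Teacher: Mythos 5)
Your proof is correct, and its skeleton is the paper's: reduce to the $2$-valency condition via Lemma~\ref{lem:valency2}, invoke the normal-Sylow-subgroup fact for groups of order $p^2q$, and split on the isomorphism type and normality of $P$. The cyclic case and the $P\ntriangleleft G$ case are handled the same way in both proofs. Where you genuinely diverge is the crux subcase $P\cong\mathbb{Z}_p\times\mathbb{Z}_p$, $P\triangleleft G$, $Q\ntriangleleft G$: the paper first counts $Q$-fixed points among the $p+1$ order-$p$ subgroups of $P$ to see that the number of normal ones is $0$ or at least $2$, handles the irreducible case separately, and then excludes an order-$pq$ overgroup of a non-normal $T$ by extracting the incompatible divisibilities $q\divides p-1$ and $p\divides q-1$ from the isomorphism types $\mathbb{Z}_p\rtimes\mathbb{Z}_q$ and $\mathbb{Z}_q\rtimes\mathbb{Z}_p$. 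You get the same conclusion in one stroke: since $P$ is abelian and maximal, $N_G(A)=P$ for non-normal $A$ of order $p$, and any overgroup $H$ of order $pq$ would have $H\cap P=A\triangleleft H$, forcing $H\leq N_G(A)=P$, which is absurd. This is shorter, avoids the orbit count and the structural analysis of order-$pq$ subgroups entirely, and has the further merit of making the converse direction (all order-$p$ subgroups normal implies $2$-valency holds, checked at both the order-$p$ and order-$q$ minimal subgroups) fully explicit, where the paper is terse. Both routes prove the same statement; yours trades the paper's arithmetic obstruction for a direct normalizer argument.
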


\begin{proof}
  Let $G$ be a group of order $p^2q$ with a Sylow $p$-subgroup $P$ and
  a Sylow $q$-subgroup $Q$. It is a well-known fact that a group of order $p^2q$
must have a normal Sylow subgroup. Thus, either $P$ or $Q$ is a normal subgroup of $G$.
  
  \emph{Case I:} $P\cong\mathbb{Z}_{p^2}$. If $P\triangleleft G$, then
  $G$ has a unique subgroup of order $p$. However, this implies any
  $q$-subgroup is contained in one and only one subgroup (of order
  $pq$). Assume that $P\ntriangleleft G$. Then $Q$ is a normal
  subgroup of $G$ and there exists a normal subgroup
  $M\cong\mathbb{Z}_q\rtimes\mathbb{Z}_p$ containing all subgroups of
  order $p$. Those two facts imply that $M$ is the unique subgroup
  containing $Q$.

  \emph{Case II:} $P\cong\mathbb{Z}_p\times\mathbb{Z}_p$.

  \emph{Case II (a):} $Q\triangleleft G$. Clearly, $2$-valency condition holds in this case. 

  \emph{Case II (b):} $Q\ntriangleleft G$. As was remarked earlier the Sylow $p$-subgroup must be a normal subgroup in this case. First, we shall observe
  that either there is no normal subgroup of $G$ of order $p$ or there
  are more than one. As $Q\ntriangleleft G$, the index of
  $N_G(Q)$ is either $p$ or $p^2$ and this implies
  $q\divides  p-1$ or $q\divides p+1$. Consider the action of $Q$ on the
  subgroups of $P$ by conjugation. Since the length of an orbit is
  either $1$ or $q$, the number of fixed points (the number of normal
  subgroups of order $p$) may be congruent $0$ or $2$ modulo $q$. Next,
  we determine the groups in which $Q$ is contained in at most one
  subgroup. If $Q$ acts on $P$ irreducibly (without fixed points) and
  $Q$ is contained in subgroup $M$ of order $pq$, then
  $M\cong\mathbb{Z}_q\rtimes\mathbb{Z}_p$ and it normalizes
  $Q$. Moreover, it is the unique subgroup containing $Q$ as
  $Q\ntriangleleft G$. If there are distinct normal subgroups $U$ and
  $V$ of order $p$, then clearly $UQ$ and $VQ$ are distinct
  subgroups containing $Q$. Finally, we determine the groups in which
  a (non-normal) subgroup $T$ of order $p$ is not contained in a
  subgroup of order $pq$. As we have seen that groups in which $Q$
  acts on $P$ irreducibly does not satisfy $2$-valency condition, we
  further assume that there exist two normal subgroups $U$ and $V$ of
  order $p$. Suppose that $T$ is contained in a subgroup $M$ of order
  $pq$. Then as $T\ntriangleleft G$, we have
  $M\cong\mathbb{Z}_q\rtimes\mathbb{Z}_p$. On the other hand, both
  $UQ$ and $VQ$ cannot be isomorphic to
  $\mathbb{Z}_p\times\mathbb{Z}_q$, as otherwise, $Q<Z(G)$
  implying $G$ is abelian. That is, one of $UQ$ and $VQ$ is isomorphic
  to $\mathbb{Z}_p\rtimes\mathbb{Z}_q$ which is impossible. Therefore,
  $T$ is not contained in any subgroup of order $pq$.





\end{proof}

As is seen by Lemma~\ref{lem:p2q}, many of the groups of order $p^2q$ do not satisfy $2$-valency condition. Compare it with the following result.

\begin{lem}\label{lem:p2q-2}
  Let $G$ be a group of order $p^2q$. Then $G$ is $3$-connected if and
  only if
	 $$G\cong \langle a,b,c\mid a^p=b^p=c^q=1, ab=ba, cac^{-1}=a^{\lambda}, cbc^{-1}=b^{\lambda} \rangle $$ 
         where $q\divides  p-1$ and $\lambda>1$ is any integer such
         that $\lambda^q\equiv 1 \pmod{p}$.
\end{lem}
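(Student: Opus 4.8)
The plan is to combine Lemma~\ref{lem:p2q} with the elementary fact that a $3$-connected graph has minimum degree at least $3$. A group of order $p^2q$ has a normal Sylow subgroup, and Lemma~\ref{lem:p2q} describes exactly when such a $G$ fails to be $2$-connected; so $\kappa(G)\geq 2$ forces a Sylow $p$-subgroup $P$ to be elementary abelian, with either (i) $P\trianglelefteq G$ and every subgroup of $G$ of order $p$ normal, or (ii) $P$ not normal (so that the Sylow $q$-subgroup $Q$ is normal in $G$). In case (i), a linear map of $P\cong\mathbb{F}_p^{2}$ fixing all $p+1$ one-dimensional subspaces is a scalar, so the conjugation action of $Q$ on $P$ is by a scalar; hence $G\cong(\mathbb{Z}_p\times\mathbb{Z}_p)\times\mathbb{Z}_q$ (trivial action) or $G$ is the group $G_\lambda$ in the statement (with $\langle c\rangle$ acting by the scalar $\lambda\not\equiv 1\pmod p$, so that $G_\lambda$ is non-abelian). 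Thus it suffices to prove: $G_\lambda$ is $3$-connected, while $(\mathbb{Z}_p\times\mathbb{Z}_p)\times\mathbb{Z}_q$ and every group of type (ii) is not.

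For the negative assertions I will exhibit a minimal subgroup of degree $2$ in $\Gamma(G)$. If $G=(\mathbb{Z}_p\times\mathbb{Z}_p)\times\mathbb{Z}_q$ then a subgroup $U$ of order $p$ lies in precisely the two proper subgroups $P$ and $UQ$, so $\deg_{\Gamma(G)}(U)=2$. If $G=Q\rtimes P$ with $Q\cong\mathbb{Z}_q$ normal, $P\cong\mathbb{Z}_p\times\mathbb{Z}_p$ and the action nontrivial, then, since $q^{2}\nmid|G|$ and $Q\trianglelefteq G$, every subgroup $H$ of order $pq$ satisfies $H\cap Q\neq 1$, hence $Q\leq H$; consequently the only subgroup of order $pq$ containing a given subgroup $L$ of order $p$ is $LQ$. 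Now let $K=C_P(Q)$ be the (central) kernel of the action, a subgroup of order $p$. For a subgroup $L$ of order $p$ with $L\neq K$ one has $\langle L,K\rangle$ of order $p^{2}$, so $L$ lies in a unique Sylow $p$-subgroup of $G$; hence the only proper subgroups containing $L$ are that Sylow $p$-subgroup and $LQ$, giving $\deg_{\Gamma(G)}(L)=2$. In both cases $\Gamma(G)$ is not $3$-connected, so by Lemma~\ref{lem:p2q} we get $\kappa(G)=2$.

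The substantive part is that $G_\lambda$ is $3$-connected. Here $P=\langle a,b\rangle$ is the unique Sylow $p$-subgroup, its $p+1$ subgroups $U_0,\dots,U_p$ of order $p$ are all normal, and since $c$ fixes no nonzero element of $P$ one checks $N_G(Q)=Q$, so there are $p^{2}$ Sylow $q$-subgroups $V_1,\dots,V_{p^{2}}$. The remaining proper nontrivial subgroups are $P$ and the subgroups $U_iV_j$ of order $pq$; each $U_iV_j\cong\mathbb{Z}_p\rtimes\mathbb{Z}_q$ contains exactly one $U_i$ and exactly $p$ of the $V_j$. Hence there are $p(p+1)$ subgroups of order $pq$; every $V_j$ lies in exactly $p+1$ of them, namely $U_0V_j,\dots,U_pV_j$; any two distinct subgroups of order $pq$ intersect in a proper subgroup of each, so the $p+1$ subgroups through a fixed $V_j$ pairwise intersect exactly in $V_j$ and the $p$ subgroups through a fixed $U_i$ pairwise intersect exactly in $U_i$; and the $p+1$ subgroups $U_iV_j$ $(i=0,\dots,p)$ through a fixed $V_j$ together contain all $p^{2}$ of the $V$'s. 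By Proposition~\ref{prop:upward}(iii) it is enough to produce three independent paths between any two minimal subgroups, and I will take: between $U_i$ and $U_{i'}$, the path through $P$ together with $(U_i,U_iV_j,V_j,U_{i'}V_j,U_{i'})$ for two suitable indices $j$; between $V_j$ and $V_k$, the paths $(V_j,U_iV_j,U_i,U_iV_k,V_k)$ for three values of $i$; and between $U_i$ and $V_j$, the path $(U_i,U_iV_j,V_j)$, the path $(U_i,P,U_{i'},U_{i'}V_j,V_j)$, and the path $(U_i,W,V_k,U_mV_j,V_j)$ where $W\neq U_iV_j$ is a second subgroup of order $pq$ through $U_i$, $V_k\leq W$, and $m$ is the index (necessarily $\neq i$) with $V_k\leq U_mV_j$. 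Since $|V(G_\lambda)|=2p^{2}+2p+2>3$, this yields $\kappa(G_\lambda)\geq 3$.

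The main obstacle is checking that these families of paths are internally independent; that reduces to the structural facts listed above together with confirming that there are enough subgroups of order $pq$ and enough of the indices $0,\dots,p$ to make all the required choices. This needs only $p\geq 2$, but the bound is tight when $p=2$ (there are then only three lines in $P$ and only two subgroups of order $pq$ through each $U_i$, with the Sylow $q$-subgroups just barely sufficing to complete the third path in each case), so that case must be verified directly.
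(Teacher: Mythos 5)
Your argument is correct, and it takes a genuinely different route from the paper's. The paper proves Lemma~\ref{lem:p2q-2} by its own case analysis on which Sylow subgroup is normal: in each configuration other than ``$P\triangleleft G$ elementary abelian with $N_G(Q)=Q$'' it exhibits a minimal subgroup violating the $3$-valency condition, and in the remaining case it derives the presentation by noting that $3$-connectivity forces $Q$ to normalize at least three subgroups of order $p$, whence the action is scalar; the converse direction is then dispatched by checking the $3$-valency condition and observing that the maximal subgroups form a complete subgraph by the product formula. You instead (a) invoke Lemma~\ref{lem:p2q} to reduce at once to the two cases ``$P$ elementary abelian and normal with every line normal'' and ``$P$ not normal,'' (b) use the linear-algebra fact that an element of $\mathrm{GL}_2(\mathbb{F}_p)$ fixing every one-dimensional subspace is a scalar, and (c) prove that $G_\lambda$ is $3$-connected by a full enumeration of $V(G_\lambda)$ and explicit independent paths rather than by a valency argument. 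Your version of the forward direction is cleaner because it reuses Lemma~\ref{lem:p2q}, and your converse is more self-contained than the paper's rather terse closing sentence, at the cost of more bookkeeping; the paper's converse is shorter because it leans on the product formula (any two subgroups of order $pq$ intersect in a subgroup of order at least $q$).

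Two small points. First, in your third family of paths (from $U_i$ to $V_j$) you must also arrange $m\neq i'$, since otherwise the second and third paths would share the inner vertex $U_{i'}V_j=U_mV_j$; this is covered by your closing remark about having enough indices --- fix $W$, $V_k$, and hence $m$ first, then choose $i'\notin\{i,m\}$, which is possible since there are $p+1\geq 4$ indices. Second, your worry about $p=2$ is vacuous: the hypothesis $q\divides p-1$ with $q$ prime forces $p\geq 3$, so $G_\lambda$ does not exist for $p=2$ and no separate verification is needed there.
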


\begin{proof}
  Let $G$ be a $3$-connected group of order $p^2q$ and let $Q$ be a
  $q$-subgroup of $G$. Take a minimal $p$-subgroup $U$ of $G$ and let
  $P$ be a Sylow $p$-subgroup containing $U$.

  \emph{Case I:} $Q\triangleleft G$. Clearly, $QU$ is the unique
  subgroup of order $pq$ containing $U$. Moreover, there exist at
  least two distinct Sylow $p$-subgroups containing $U$, as otherwise,
  $3$-valency condition does not hold. This, in turn, implies that
  $N_G(U)=G$. Suppose that Sylow $p$-subgroups are
  cyclic. Then, $U$ is the unique subgroup of order $p$ and $QU$ is
  the unique subgroup containing $Q$. Again $3$-valency condition
  cannot be satisfied. Now, suppose that Sylow $p$-subgroups are
  elementary abelian. Since $U$ is a normal subgroup of $G$ and since
  this must be the case for any minimal $p$-subgroup, $P$ is also a
  normal subgroup of $G$ which is a contradiction.

  \emph{Case II:} $Q\ntriangleleft G$. Since $G$ is a solvable group,
  the $p$-core $O_p(G)$ is a non-trivial normal subgroup of
  $G$. Thus, we shall consider following two sub-cases.

  \emph{Case II (a):} $P\ntriangleleft G, U\triangleleft G$. Suppose
  that Sylow $p$-subgroups are cyclic. As in Case I, $PQ$ is the
  unique subgroup containing $Q$ and this case can be discarded. Now,
  suppose that Sylow $p$-subgroups are elementary abelian. However, by
  the proof of Lemma~\ref{lem:p2q} we know that no such group exists.


  \emph{Case II (b):} $P\triangleleft G$. As in previous cases, $P$
  cannot be a cyclic subgroup. Thus
  $P\cong\mathbb{Z}_p\times\mathbb{Z}_p$. We claim that
  $N_G(Q)=Q$. Assume contrarily that $N_G(Q)$ is a
  group of order $pq$. Then $N_G(Q)$ is self-normalizing and
  by the product formula any two distinct conjugates of it intersect at
  $Q$; however, $Q$ is normal in both of them which is a
  contradiction. Therefore, any subgroup of order $pq$ must be
  isomorphic to $\mathbb{Z}_p\rtimes\mathbb{Z}_q$ and since $G$ is
  $3$-connected there must exist at least three such subgroups
  containing $Q$. To write a presentation for $G$, let $a,b,c$ be
  three elements generating $G$ such that $a,b$ are of order $p$ and
  $c$ is of order $q$. Moreover, we may suppose that $c$ normalizes
  $\langle a\rangle$, $\langle b\rangle$, and $\langle ab^k\rangle$
  where $k\geq 1$ is an integer. (Notice that any subgroup of order
  $p$ is generated by some element $ab^k$ for some integer $k$.) In
  other words, we have the relations $cac^{-1}=a^{\lambda_1}$,
  $cbc^{-1}=b^{\lambda_2}$, and $cab^{k}c^{-1}=(ab^k)^t=a^tb^{tk}$ for
  some integers $\lambda_1,\lambda_2,t$. On the other hand,
  $cab^{k}c^{-1}=a^{\lambda_1}b^{\lambda_2k}$ implying
  $\lambda_1\equiv\lambda_2\pmod{p}$ and hence we may take
  $\lambda:=\lambda_1=\lambda_2$. As a consequence all $p$-subgroups
  are normal in $G$. Notice that $\lambda=1$ implies
  $Q\triangleleft G$, hence $\lambda>1$. Moreover, since
  $a=c^qac^{-q}=a^{\lambda^q}$, we have $\lambda^q\equiv
  1\pmod{p}$.
  Conversely, it can be verified that a group with this presentation
  is of order $p^2q$.

  Let $G$ a the group with the given presentation. To conclude the
  proof, we shall show that $G$ is $3$-connected. We claim $G$
  satisfies $3$-valency condition. From the previous arguments,
  $\langle c\rangle$ is contained in at least three subgroups of order
  $pq$ and any element of order $q$ acts on $P$ in the same way as $c$
  does. Moreover, all $p$-subgroups are normal and there are clearly
  more than three proper subgroups containing any subgroup of order
  $p$. Finally, since the maximal subgroups of $G$ form a complete
  graph in $\Gamma(G)$ by the product formula, we deduce that $G$ is
  $3$-connected.
\end{proof}

\begin{proof}[Proof of Theorem B]
  Let $G$ be a finite solvable group which is not a $p$-group. (Finite
  $p$-groups that are not $2$-connected are presented in
  Lemma~\ref{lem:pgroup2}.) Since $G$ is a solvable group by
  assumption, there exists a maximal subgroup $M$ of $G$ of prime
  index. By Lemma~\ref{lem:valency2}, it is enough to determine groups
  for which the $2$-valency condition does not hold. Suppose that $G$
  does not satisfy $2$-valency condition. Let $A$ be a minimal
  subgroup of order $q$ such that $A$ is (strictly) contained in at
  most one proper subgroup of $G$.

  First, suppose that $q^2\divides |G|$ and let $Q$ be a Sylow
  $q$-subgroup containing $A$. Then either $M=Q$ and $|G|=pq^2$ where
  $[G:M]=p$, or $M\neq Q$ and $[G:M]=q$ as $Q$ is the unique subgroup
  containing $A$. The first case was considered in
  Lemma~\ref{lem:p2q}. In the latter case, if $Q\triangleleft G$ then $|G|=pq^2$ as $Q$ is the only proper subgroup containing $A$; hence, we again refer to Lemma~\ref{lem:p2q}. And if $Q\ntriangleleft G$, then we may further assume that $M$ is a normal subgroup. (Notice that since Sylow $q$-subgroup $Q$ is maximal, there must be a normal subgroup of $G$ of index $q$ in that case.) Moreover, $M$ must be a minimal normal subgroup as $Q$ is the unique proper subgroup containing $A$. Since $G$ is solvable $M\cong\mathbb{Z}_p\times\dots\times\mathbb{Z}_p$ for some prime $p$ different from $q$. However, this is impossible as $q\divides |M|$.

  Next, suppose that $A$ is a Sylow $q$-subgroup. If $|G|=pq$, clearly
  $2$-valency condition does not hold and the case $|G|=p^2q$ was
  already considered in Lemma~\ref{lem:p2q}. Suppose that
  $p,r\divides |G|$ where $p$ and $r$ are distinct prime numbers different
  from $q$. Since $G$ is solvable, there exist a Hall
  $\{p,q\}$-subgroup and a Hall $\{q,r\}$-subgroup containing
  $A$. Hence, we may assume $|G|=p^{\alpha}q$, $\alpha\geq 3$. If
  $A\triangleleft G$, then it is contained in more than one proper
  subgroup. Thus $A\ntriangleleft G$. Furthermore, $P\triangleleft G$ in this case, where $P$ is the Sylow $p$-subgroup of $G$. Suppose to the contrary that $P\ntriangleleft G$. Since $G$ is solvable, there exists a normal subgroup $M$ of index $p$. Hence, $M$ contains all Sylow $q$-subgroups implying $[M:N_M(A)]=[G:N_G(A)]$. On the other hand, since $M$ is the unique proper subgroup of $G$ containing $A$, either $N_M(A)=A$ or $N_M(A)=M$. In the first case $N_G(A)$ would be a subgroup of order $pq$ contradicting the assumption that $A$ is contained in at most one subgroup. And in the latter case $A$ would be a normal subgroup of $G$ which is again a contradiction. Therefore $P\triangleleft G$. For the rest of the proof, we take $Q:=A$. Let $N$ be a minimal normal subgroup of $G$. Since $G$ is solvable $N$ is elementary abelian. Now we examine two cases:

  \emph{Case I:} $P=N$. Thus, $P$ is elementary abelian. We claim that
  $G$ does \emph{not} satisfy $2$-valency condition if and only if $Q$
  acts on $P$ irreducibly ($Q$ does not normalize any proper non-trivial subgroup of $P$) and the order of $N_G(Q)$ is at
  most $pq$. Sufficiency is obvious. For the necessity, it is obvious that if $Q$ is a maximal subgroup of $G$, then $Q$ is self-normalizing and acts on $P$ irreducibly. If $Q$ is not maximal we argue as follows: Let $K$ be a minimal subgroup properly containing $Q$. Then either
  $Q$ is a normal subgroup of $K$ (hence $K\leq N_G(Q)$) or
  $O_p(K)$ is a non-trivial normal subgroup of $K$. In the first case, clearly $K=N_G(Q)$ and $|K|=pq$. Moreover, as $K$ is the unique proper subgroup of $G$ containing $Q$ by assumption, the action of $Q$ on $P$ is irreducible. In the latter case, since $O_p(K)$ is also a normal subgroup of $G$ which is not trivial, $P$ and $O_p(K)$ coincides by the minimality of $P$ implying $K=G$. The action of $Q$ on $P$ is irreducible in this case also.

  \emph{Case II:} $P\neq N$.

  \emph{Case II (a):} $P$ is elementary abelian. A Theorem of Gaschütz
  (see \cite[Theorem 7.43]{Rotman1995}) states that an abelian normal
  $p$-subgroup has a complement in $G$ if and only if it has a
  complement in a Sylow $p$-subgroup. Clearly, $N$ is complemented in
  the elementary abelian $p$-subgroup $P$. Let $K$ be the complement
  of $N$ in $G$. Then $NQ$ and $K$ are two distinct subgroups
  containing $A$.

  \emph{Case II (b):} $P$ is \emph{not} elementary abelian. As $NQ$ is
  a proper subgroup of $G$ containing $A$, we conclude that $P$ and
  $N$ are the only proper non-trivial normal subgroups of
  $G$. Moreover, $N$ coincides with $\Phi(P)$, since $\Phi(P)$ is a
  non-trivial characteristic subgroup of $M$. Notice that a
  characteristic subgroup of a normal subgroup is normal in the whole
  group (see \cite[Lemma 5.20]{Rotman1995}). That is, $P$ is a
  $p$-group such that its Frattini subgroup $N$ is elementary
  abelian. Moreover, since $Q$ is contained in at most one subgroup,
  either $N_G(Q)=Q$ or $N_G(Q)=NQ$. Notice that in
  the latter case we have $NQ\cong \mathbb{Z}_p\times
  \mathbb{Z}_q$.
  Consider the action of $Q$ on the set of subgroups of $P$ by
  conjugation. It is easy to see that the fixed points of this action
  must be precisely $P$, $N$, and the trivial subgroup. Clearly, $Q$
  acts on $N$ irreducibly and by the Correspondence Theorem the
  induced action of $Q$ on $P/N$ is also irreducible. Conversely, if
  the action of $Q$ on $N$ and $P/N$ are irreducible, then $N$ is the
  only proper non-trivial subgroup of $P$ fixed by $Q$. To see this,
  take an element $a\in Q$ and consider its action. If $1<X<N$, then
  $X^a\neq X$ by assumption. Let $N<NX,\,X\neq P$ and $X^a=Y$. Notice that since $N$ is contained by every maximal subgroup of $G$, we have $NX\neq P$. We want to
  show that $X\neq Y$. By assumption $(NX/N)^a\neq NX/N$. However,
  $(NX/N)^a=(NX)^a/N=NY/N$ implying $X\neq Y$.
\end{proof}

\begin{rem}
  The ``smallest'' non-solvable group is the alternating group $A_5$
  on five letters and its order is divisible by three distinct
  primes. However, it does not satisfy the $2$-valency condition. To be
  more precise, if $H$ is a subgroup of order $5$, then there is
  exactly one proper subgroup, say $K$, of $A_5$ containing $H$. To
  see this, first observe that any maximal subgroup $M$ of $A_5$ has
  index $\geq 5$, as otherwise, there would be a homomorphism
  $\phi\colon A_5\to S_{G/M}$ with a non-trivial kernel which is
  impossible. Hence the only possibility for the order of $K$ is
  $10$. Since $H$ is not a normal subgroup of $A_5$ and since $H$ is normalized by the maximal subgroup $K$, we see that $K$ is the unique subgroup containing $H$.
\end{rem}

\subsection*{Nilpotent groups}

As it was mentioned in the Introduction, to show that the intersection graph of a given solvable group is $3$-connected we must claim the existence of ``sufficiently'' many vertices to construct at least three independent paths for any pair of minimal subgroups which seems to be not an easy task. (Or, conversely, we must claim the non-existence of vertices to verify that the graph is not $3$-connected.) Of course, Hall Theorems enables us to claim that $3$-valency condition is satisfied if there are at least four distinct prime divisors of the order of the group. Also, it is not difficult to show that such groups are indeed $3$-connected (compare with Corollory~\ref{cor:divisors} below). However, if there are less than four prime divisors things are more complicated. Therefore, in this section we restrict our attention to nilpotent groups.   

\begin{lem}\label{lem:valency3}
  Let $G$ be a finite supersolvable group. Then $\kappa(G)\geq 3$ if and
  only if $G$ satisfies the $3$-valency condition.
\end{lem}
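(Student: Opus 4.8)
The plan is to mimic the structure of the proofs of Lemma~\ref{lem:valency2} and Lemma~\ref{lem:pgroup2}: one direction is trivial, and for the other we must show that the $3$-valency condition forces three independent paths between any pair of minimal subgroups. So suppose $G$ is supersolvable and satisfies the $3$-valency condition, and let $A_1,A_2$ be distinct minimal subgroups. Exactly as in Lemma~\ref{lem:valency2}, if $\langle A_1,A_2\rangle$ is a ``third maximal'' subgroup — more precisely, a subgroup of order length $\ell(G)-3$ — then by supersolvability (so that all maximal chains have the same length, and by the product formula on the overlying chains) one obtains three independent paths immediately. Thus the real work is confined to the case where $\langle A_1,A_2\rangle$ is large, i.e. has order length $\geq \ell(G)-2$.

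For the main case I would first invoke the earlier Proposition on normal series: pick a chief series $1<N_1<N_2<\dots<N_\ell=G$ (supersolvable, so each factor is cyclic of prime order and $N_i\triangleleft G$). If $\ell:=\ell(G)\geq 6$, Corollary~\ref{cor:order} already gives $\kappa(G)\geq \ell-3\geq 3$, so we may assume $\ell\leq 5$, and in fact by the same corollary the only cases not already settled are $\ell\in\{4,5\}$ together with the small cases $\ell\leq 3$ (where the $3$-valency condition typically fails, consistent with the statement). So the substance is: for $G$ supersolvable with $\ell(G)\in\{4,5\}$ satisfying the $3$-valency condition, and for $A_1,A_2$ with $\langle A_1,A_2\rangle$ of order length $\ell-2$ or $\ell-1$ or $=G$, exhibit three independent paths. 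Here one builds paths of the shape $(A_i,H_i,K_i,\dots)$ where $H_i\subsetneq K_i$ are furnished by the $3$-valency condition applied to $A_i$ (each $A_i$ lies in at least three proper subgroups, and by Proposition~\ref{prop:upward}(ii)-type reasoning one may take a maximal one among them), then join the $K_i$'s through subgroups containing a minimal normal subgroup $N$, using the product formula $|XY|=|X||Y|/|X\cap Y|$ to guarantee that the joining subgroups $N A_i$, $NH_i$, etc., are proper and distinct, and using supersolvability to guarantee maximal subgroups have prime index so that the resulting subgroups really do form edges. When $A_1,A_2$ both lie below a single minimal normal subgroup $N$ one gets the path $(A_1,N,A_2)$ for free and needs only two more; when they generate $G$ one exploits that $G$ has at least two minimal normal subgroups or at least two maximal subgroups containing each $A_i$.

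The main obstacle, as in Lemma~\ref{lem:valency2}, is bookkeeping the coincidences: the candidate inner vertices $N A_i$, $H_i$, $K_i$ may collide with each other or with $A_1,A_2$, and one must verify in each configuration that after shortening/rerouting the paths remain internally independent. The cleanest way to control this is to always choose, for each $A_i$, a maximal subgroup $M_i$ containing $A_i$ with $M_i\neq N A_i$ (possible since $A_i$ lies in $\geq 3$ proper subgroups and hence, pushing up, in $\geq 2$ maximal subgroups other than any fixed one — here supersolvability, giving prime-index maximals and equal chain lengths, is what keeps the count honest), and then route the third path through a second such maximal subgroup or through the Frattini-type intersection. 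I would organise the write-up as: (Case~I) $\langle A_1,A_2\rangle$ has small order length — done by supersolvable chains; (Case~II) $A_1,A_2$ lie in a common minimal normal subgroup; (Case~III) the remaining configurations, split according to whether $NA_1, NA_2$ are proper and whether they coincide — each handled by an explicit triple of paths with the coincidence cases noted parenthetically, exactly in the style already used in Lemma~\ref{lem:valency2} and Lemma~\ref{lem:p2q-2}.
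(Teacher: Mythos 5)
The proposal does not close the argument, and one of its load-bearing assertions is false. You claim that the $3$-valency condition lets you choose, for each $A_i$, ``$\geq 2$ maximal subgroups other than any fixed one,'' because $A_i$ lies in at least three proper subgroups. That inference is wrong: the three proper subgroups containing $A_i$ may form a chain inside a single maximal subgroup (e.g.\ the minimal subgroup of $\mathbb{Z}_{p^5}$ lies in three proper subgroups but only one maximal subgroup). Since your whole routing scheme rests on having two or three distinct maximal subgroups \emph{containing} each $A_i$, the construction collapses exactly in these cases. Beyond that, the proposal never actually exhibits the three independent paths in the main cases ($\ell(G)\in\{4,5\}$, $\langle A_1,A_2\rangle$ large); it describes a strategy and defers the coincidence analysis, which you yourself identify as the main obstacle. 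A plan plus an acknowledgement that the hard part remains is not a proof.

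What you are missing is the one structural fact that makes this lemma easy for supersolvable groups: every maximal subgroup has prime index, so by the product formula any non-trivial subgroup that is \emph{not minimal} intersects \emph{every} maximal subgroup non-trivially. The paper's proof exploits this by splitting on the number of maximal subgroups of $G$. If $G$ has one or two maximal subgroups it is cyclic of order $p^\alpha$ or $p^aq^b$ and everything is checked directly. If $G$ has at least three maximal subgroups $M_1,M_2,M_3$ (which need not contain $A$ or $B$ at all), the $3$-valency condition supplies distinct proper subgroups $X_i>A$ and $Y_i>B$, and $(A,X_i,M_i,Y_i,B)$ are automatically paths because $X_i$ and $Y_i$ are non-minimal and hence meet each $M_i$; independence is then only a matter of shortening in case of coincidences. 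This removes the need for minimal normal subgroups, chief series, the reduction via Corollary~\ref{cor:order}, and the delicate case analysis on $\langle A_1,A_2\rangle$ that your outline requires but does not carry out.
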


\begin{proof}
  Sufficiency is obvious. Let $G$ be a finite supersolvable group
  satisfying the $3$-valency condition. We want to show that there are
  at least three independent paths between any pair of minimal
  subgroups $A$ and $B$. Clearly, we may suppose that $\ell(G)\geq 3$,
  since groups of order $p^2$ and $pq$ does not satisfy even
  $1$-valency condition. Notice that as $G$ is supersolvable, any
  maximal subgroup is of prime index; and thus, if $X$ is a
  non-trivial subgroup of $G$ which is not minimal, then $X$
  intersects any maximal subgroup non-trivially.

  \emph{Case I:} $G$ has exactly one maximal subgroup. Then $G$ is a
  cyclic group of prime power order $p^{\alpha}$ and it satisfies
  $3$-valency condition if and only if $\alpha\geq 5$ which is the
  case if and only if $\kappa(G)\geq 3$.

  \emph{Case II:} $G$ has exactly two maximal subgroups. If $G$ is a
  $p$-group, then the number of maximal subgroups $\equiv 1 \pmod{p}$
  (see \cite[p.~30]{Berkovich2008}). Also, if $|G|$ is divisible by
  three distinct prime divisors, then it would have at least three
  maximal subgroups corresponding (containing the corresponding Hall
  subgroups). Hence $|G|=p^aq^b$. Obviously, maximal subgroups must be
  normal and hence $G\cong P\times Q$ is a nilpotent group where $P$ and
  $Q$ are Sylow $p$- and Sylow $q$- subgroups respectively. Observe
  that if $H\trianglelefteq P$ and $K\trianglelefteq Q$, then
  $HK\trianglelefteq G$ as $G$ is the direct product of $P$ and
  $Q$. However, since any maximal subgroup of a $p$-group is normal,
  $P$ and $Q$ have exactly one maximal subgroups meaning both are
  cyclic groups of prime power order and in turn $G$ is also a cyclic
  group. It can be easily observed that $3$-valency condition is
  equivalent to the $3$-connectedness for such groups.

  \emph{Case III:} $G$ has at least three maximal subgroups. Let $M_i$
  be maximal subgroups, $X_i$ be subgroups containing $A$, and $Y_i$
  be subgroups containing $B$ for $1\leq i\leq 3$. Then
  $(A,X_i,M_i,Y_i,B)$, $1\leq i \leq 3$, are three independent paths
  between $A$ and $B$. Of course, in case of a coincidence the
  corresponding paths can be shortened accordingly.
\end{proof}

\begin{cor}\label{cor:divisors}
  Let $G$ be a finite supersolvable group with $\kappa(G)<3$. Then the
  number of prime divisors of $|G|$ is at most three. Moreover, if
  there are exactly three distinct prime divisors, then $|G|$ is
  square-free.
\end{cor}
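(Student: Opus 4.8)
The plan is to prove the contrapositive. By Lemma~\ref{lem:valency3} it is enough to show that $G$ satisfies the $3$-valency condition --- that every minimal subgroup $A$ of $G$ is strictly contained in at least three proper subgroups --- under the assumption that $|G|$ has either at least four distinct prime divisors, or exactly three distinct prime divisors with $|G|$ not square-free. Fix such an $A$; it has prime order, say $|A|=p$ (and $p\mid|G|$). Throughout I use that $G$ is solvable, so that Hall's theorems apply and in particular every $\{p,t\}$-subgroup of $G$ lies in some Hall $\{p,t\}$-subgroup; that supersolvability passes to subgroups; and that in a supersolvable group every maximal subgroup has prime index.

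First suppose $|G|$ has at least four prime divisors. Then $|G|$ has three prime divisors $q,r,s$ distinct from $p$, and since $A$ is a $\{p,t\}$-subgroup for each of $t=q,r,s$, Hall's theorem provides Hall subgroups $H_{pq},H_{pr},H_{ps}$ of $G$ containing $A$. Their orders are divisible by precisely the primes in $\{p,q\}$, $\{p,r\}$, $\{p,s\}$ respectively, so they are pairwise distinct; each is proper in $G$ (for instance $r\mid|G|$ but $r\nmid|H_{pq}|$); and each strictly contains $A$ since its order is divisible by a prime $\neq p$. These three subgroups witness the $3$-valency condition at $A$.

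Now suppose $|G|$ has exactly three prime divisors and is not square-free. Since some prime divides $|G|$ to a power $\geq 2$, I may name the prime divisors so that there is a prime $q\neq p$ with $p^2\mid|G|$ or $q^2\mid|G|$, and let $r$ be the remaining prime divisor. (Such a $q$ exists: if $p^2\mid|G|$ take for $q$ any prime divisor distinct from $p$; otherwise some prime divisor distinct from $p$ is repeated, as $|G|$ is not square-free, and take it for $q$.) Pick Hall subgroups $H_{pq}$ and $H_{pr}$ of $G$ containing $A$; by the choice of $q$ we have $|H_{pq}|>pq$. The subgroup $A$ lies in some maximal subgroup $M$ of $H_{pq}$, and as $H_{pq}$ is supersolvable, $[H_{pq}:M]$ is a prime, hence $p$ or $q$; a brief computation with orders --- using $p\mid|M|$ together with $|H_{pq}|>pq$ --- shows $|M|>p$, so that $A\subsetneq M\subsetneq H_{pq}$. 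Then $M$, $H_{pq}$, $H_{pr}$ are three distinct proper subgroups of $G$, each strictly containing $A$: indeed $M\subsetneq H_{pq}$; the orders of $M$ and $H_{pq}$ are not divisible by $r$ while $r\mid|H_{pr}|$; we have $q\mid|H_{pq}|$ but $q\nmid|H_{pr}|$; and none equals $G$ because $p,q,r$ all divide $|G|$.

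In both cases every minimal subgroup of $G$ lies in at least three proper subgroups, so Lemma~\ref{lem:valency3} gives $\kappa(G)\geq 3$; the first case furnishes the bound on the number of primes and the second the square-freeness assertion. The one point that needs care is the verification that $|M|>p$, i.e.\ that $A$ is not maximal in $H_{pq}$: this would fail only if $H_{pq}$ had order $pq$, which is exactly what the hypothesis ``not square-free'' rules out via $|H_{pq}|>pq$, while supersolvability is what confines $[H_{pq}:M]$ to a single prime. Everything else is routine bookkeeping with the orders of Hall and maximal subgroups.
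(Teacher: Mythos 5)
Your proof is correct and follows essentially the same route as the paper: reduce to the $3$-valency condition via Lemma~\ref{lem:valency3} and exhibit three proper overgroups of each minimal subgroup using Hall subgroups. The only cosmetic difference is in the three-prime case, where you obtain the third overgroup as a maximal subgroup of $H_{pq}$ strictly between $A$ and $H_{pq}$ (using that maximal subgroups of a supersolvable group have prime index), whereas the paper cases on the prime of $A$ and uses either the Sylow subgroup or a maximal subgroup of $G$ lying above a Hall subgroup.
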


\begin{proof}
  Obviously, if there are more than three distinct prime divisors of
  $|G|$, then $G$ satisfies $3$-valency condition, hence is
  $3$-connected as well. Let $|G|=p^{\alpha}q^{\beta}r^{\gamma}$ where
  $p,q,r$ are distinct prime numbers and $\alpha\geq 2$. Let $A$ be a
  minimal subgroup. If $A$ is a $p$-subgroup, then $A$ is properly
  contained in a Sylow $p$-subgroup, by a Hall $\{p,q\}$-subgroup, and
  by a Hall $\{p,r\}$-subgroup. If $A$ is a $q$-subgroup, then $A$ is
  contained in Hall $\{p,q\}$-subgroup, by a Hall $\{q,r\}$-subgroup
  and by a maximal subgroup containing the corresponding Hall
  $\{q,r\}$-subgroup. Similarly, there are at least three proper
  subgroups containing $A$ whenever $A$ is a $r$-subgroup.
\end{proof}

By Corollory~\ref{cor:order}, we know that if $G$ is a supersolvable group such that $\kappa(G)<3$, then $\ell(G)$ is at most $5$. Moreover, by using Corollory~\ref{cor:divisors} (and ignoring the $\ell(G)\leq 2$ cases), we may reduce the possible cases for the order of $G$ into the following list

\begin{table}[h!]
\centering
\caption{}
\label{table:1}
\begin{tabular}{lll}
  $ \vert G \vert = p^5 $,     & $ \vert G \vert = p^4 $,    & $ \vert G \vert = p^3 $,    \\
  $ \vert G \vert = p^4q $,    & $ \vert G \vert = p^3q $,   & $ \vert G \vert = p^2q $,   \\
  $ \vert G \vert = p^3q^2 $,  & $ \vert G \vert = p^2q^2 $, & $ \vert G \vert = pqr $.    \\
\end{tabular}
\end{table}

Actually, we may still eliminate some further cases.

\begin{lem}\label{lem:order}
  Let $G$ be a finite supersolvable group with $\kappa(G)<3$. Then the
  order of $G$ must be equal to one of the following
  $$ p^{\alpha}\; (0\leq \alpha \leq 4), \quad p^3q, \quad p^2q^2,
  \quad p^2q,\quad pqr, \quad pq $$
  where $p,q,$ and $r$ are distinct prime numbers. Moreover, if $G$ is
  nilpotent, then $|G|\neq p^2q^2$; and if $G$ is nilpotent and of
  order $p^3q$, then $G$ is cyclic.
\end{lem}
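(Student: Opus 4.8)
The plan is to start from Table~\ref{table:1}, remove the three entries of order length $5$, and then handle separately the two nilpotent refinements. The key input for the first step is the observation that \emph{every supersolvable group $G$ with $\ell(G)\ge 5$ is $3$-connected}. By Lemma~\ref{lem:valency3} it suffices to check the $3$-valency condition. Given a minimal subgroup $A$, take an unrefinable chain $A=H_0<H_1<\dots<H_m=G$ and prepend the step $1<A$ (unrefinable since $|A|$ is prime); the result is a maximal chain of subgroups of $G$, so it has length $\ell(G)$, whence $m=\ell(G)-1\ge 4$. Then $H_1,H_2,H_3$ are three distinct proper non-trivial subgroups strictly containing $A$, which is exactly the $3$-valency condition. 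Contrapositively, $\kappa(G)<3$ forces $\ell(G)\le 4$.

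Feeding $\ell(G)\le 4$ back into Table~\ref{table:1} deletes $p^5$, $p^4q$ and $p^3q^2$, while Corollary~\ref{cor:divisors} rules out the only further possibility $p^2qr$ of order length $4$ (three prime divisors force square-free order). What remains is $p^{\alpha}$ with $\alpha\le 4$ — the cases $\alpha\le 2$ and $|G|=pq$ appear in the statement's list although they were set aside when forming Table~\ref{table:1} — together with $p^3q$, $p^2q^2$, $p^2q$ and $pqr$, which is the asserted list.

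For the nilpotent refinements I would, once more via Lemma~\ref{lem:valency3}, show the two offending candidates are $3$-connected. If $G=P\times Q$ is nilpotent of order $p^2q^2$ and $A\le P$ is a minimal subgroup, then $P$, $A\times Q_0$ (for a subgroup $Q_0\le Q$ of order $q$), and $A\times Q$ are three distinct proper non-trivial subgroups strictly containing $A$ (orders $p^2$, $pq$, $pq^2$); the case $A\le Q$ is symmetric, so $3$-valency holds and $\kappa(G)\ge 3$. If $G=P\times Q$ is nilpotent of order $p^3q$ with $P$ \emph{not} cyclic, then for a minimal $A\le P$ the subgroups $P_2$ (a subgroup of $P$ of order $p^2$ containing $A$), $P$, and $A\times Q$ do the job; and for $A=Q$, the Correspondence Theorem applied to $G/Q\cong P$ puts the proper non-trivial subgroups of $G$ strictly above $Q$ in bijection with those of $P$, of which a non-cyclic $p$-group of order $p^3$ has at least three (four for $Q_8$, more in every other case). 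Hence such a $G$ is $3$-connected, so a nilpotent group of order $p^3q$ with $\kappa(G)<3$ must have cyclic Sylow $p$-subgroup, i.e. $G\cong\mathbb{Z}_{p^3q}$.

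The one point needing care is the order-length lemma: the argument hinges on the chain from $A$ to $G$ having length exactly $\ell(G)-1$, and it is supersolvability — every maximal subgroup having prime index, so that all unrefinable chains of subgroups of $G$ share the common length $\ell(G)$ — that guarantees this. The remaining verifications amount to routine counting with orders of products of subgroups inside nilpotent groups.
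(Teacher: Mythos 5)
Your proposal is correct and follows essentially the same route as the paper: eliminate the order-length-$5$ entries of Table~\ref{table:1} via the fact that in a supersolvable group every minimal subgroup lies strictly below at least $\ell(G)-2$ proper subgroups (so $\ell(G)\le 4$), then verify the $3$-valency condition directly for nilpotent groups of order $p^2q^2$ and for nilpotent groups of order $p^3q$ with non-cyclic Sylow $p$-subgroup, invoking Lemma~\ref{lem:valency3} each time. The only cosmetic difference is that you phrase the $p^3q$ case contrapositively and route the count of subgroups above $Q$ through the Correspondence Theorem, where the paper argues directly that $Q$ lies in at most two proper subgroups only when $P\cong\mathbb{Z}_{p^3}$.
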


\begin{proof}
  By Lemma~\ref{lem:valency3} we know that $G$ is $3$-connected, if
  $3$-valency condition holds. Since $G$ is supersolvable group, any
  minimal subgroup is contained in at least $\ell(G)-2$ proper
  subgroups. Thus, $\ell(G)< 5$. This eliminates the first column of
  Table~\ref{table:1}.
  
  Let $G$ be a nilpotent group of order $p^2q^2$ and let $A$ be a
  minimal subgroup of $G$. Obviously, $G$ is an abelian group and the
  normal subgroup $A$ of $G$ is contained in a subgroup of order $pq$,
  by a subgroup of order $p^2q$ and by the Sylow subgroup. Hence, $G$
  satisfies $3$-valency condition.
  
  Let $G$ be a nilpotent group of order $p^3q$. Then the Sylow
  $q$-subgroup $Q$ of $G$ is a normal subgroup and any minimal
  subgroup of order $p$ is contained in a subgroup of order $p^2$, by
  a subgroup of order $p^3$, and by a subgroup of order $pq$. Suppose
  that $Q$ is not contained in more than two proper
  subgroups. However, this is possible only if the Sylow $p$-subgroup
  $P$ is normal and $P\cong\mathbb{Z}_{p^3}$. Thus, $G$ is cyclic as
  well.
\end{proof}

\begin{lem}\label{lem:pgroup3}
  Let $G$ be a finite $p$-group. Then $\kappa(G)<3$ if and only if one
  of the following holds.
  \begin{enumerate}
  \item $|G|=p^{\alpha}$ $(0\leq\alpha\leq 3)$ and neither
    $G\cong Q_8$ nor
    $G\cong \mathbb{Z}_p\times\mathbb{Z}_p\times\mathbb{Z}_p$.
  \item $G$ is a group of order $p^4$ such that
    \begin{enumerate}
    \item $G\cong \mathbb{Z}_{p^4}$, or
    \item $\Phi(G)\cong\mathbb{Z}_{p^2}$ except $G\cong Q_{16}$, or
    \item
      $\Phi(G)\cong\mathbb{Z}_p\times\mathbb{Z}_p$ and $Z(G)<\Phi(G)$
      except
      \begin{enumerate}
      \item $G\cong \langle a,b,c\mid a^9=b^3=1, ab=ba, a^3=c^3,
      bcb^{-1}=c^4, aca^{-1}=cb^{-1} \rangle $, and 
      \item $G\cong \langle a,b,c\mid a^{p^2}=b^p=c^p=1, bc=cb, bab^{-1}=a^{p+1}, cac^{-1}=ab \rangle$ for $p>3$. 
      \end{enumerate}      
    \end{enumerate}
  \end{enumerate}
  In particular, all $p$-groups of order $>p^4$ are $3$-connected.
\end{lem}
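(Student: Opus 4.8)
The plan is to combine Lemma~\ref{lem:valency3} with a case analysis on $|G|$ and on $\Phi(G)$. Since $p$-groups are supersolvable, Lemma~\ref{lem:valency3} reduces everything to deciding when a $p$-group satisfies the $3$-valency condition, i.e.\ when every minimal subgroup lies in at least three proper subgroups. First I would dispose of the easy ranges: if $|G|=p^{\alpha}$ with $\alpha\leq 2$ the intersection graph has no edges, and if $\alpha\geq 5$ then any minimal subgroup sits inside subgroups of orders $p^2,p^3,p^4$ (three distinct proper subgroups), so the $3$-valency condition holds; this already yields the ``in particular'' clause. For $\alpha=3$ I would quote Lemma~\ref{lem:pgroup2}: every $p$-group of order $p^3$ other than $Q_8$ and $\mathbb{Z}_p\times\mathbb{Z}_p\times\mathbb{Z}_p$ fails even the $2$-valency condition, while $\Gamma(Q_8)\cong K_4$ is $3$-connected and in $\mathbb{Z}_p\times\mathbb{Z}_p\times\mathbb{Z}_p$ a minimal subgroup lies in $p+1\geq 3$ subgroups of order $p^2$; this gives case (1).

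The heart of the argument is $|G|=p^4$, where $G/\Phi(G)$ is elementary abelian, so $|\Phi(G)|\in\{1,p,p^2,p^3\}$. If $|\Phi(G)|\in\{1,p\}$ one checks directly, via the Correspondence Theorem applied to $G/\Phi(G)$, that every minimal subgroup is contained in at least $p+1\geq 3$ maximal subgroups, so $G$ is $3$-connected. If $|\Phi(G)|=p^3$ then $G$ has a unique maximal subgroup, hence $G\cong\mathbb{Z}_{p^4}$ and $\Gamma(G)\cong K_3$, giving case (2a). So assume $|\Phi(G)|=p^2$; then $G$ has exactly $p+1$ maximal subgroups, all of order $p^3$ and all containing $\Phi(G)$. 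If $A$ is a minimal subgroup with $A\leq\Phi(G)$ it lies in all $p+1\geq 3$ maximal subgroups. If $A\not\leq\Phi(G)$, then $M:=A\Phi(G)$ is the unique maximal subgroup of $G$ containing $A$, it has order $p^3$, and every subgroup of order $p^2$ containing $A$ lies in $M$; since $\Phi(M)\leq\Phi(G)$ we have $A\neq\Phi(M)$, and a short inspection of the five isomorphism types of groups of order $p^3$ shows that a minimal subgroup different from the Frattini subgroup lies in at least two subgroups of order $p^2$ precisely when the group is elementary abelian. Hence $\kappa(G)\geq 3$ if and only if $A\Phi(G)\cong\mathbb{Z}_p\times\mathbb{Z}_p\times\mathbb{Z}_p$ for every minimal subgroup $A\not\leq\Phi(G)$.

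I would then run this criterion according to the isomorphism type of $\Phi(G)$. If $\Phi(G)\cong\mathbb{Z}_{p^2}$, any $M=A\Phi(G)$ with $A\not\leq\Phi(G)$ contains a cyclic subgroup of order $p^2$ and is not cyclic, hence is not elementary abelian; so $\kappa(G)<3$ unless $G$ has a unique minimal subgroup, which for $|G|=p^4$ with $|\Phi(G)|=p^2$ forces $G\cong Q_{16}$, whose intersection graph is complete on $9>3$ vertices, so that $\kappa(Q_{16})\geq 3$. This is case (2b). If $\Phi(G)\cong\mathbb{Z}_p\times\mathbb{Z}_p$, then $M=A\Phi(G)$ is elementary abelian exactly when $A$ centralizes $\Phi(G)$, so the criterion becomes: every element of order $p$ lies in $C_G(\Phi(G))$. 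Since $G/C_G(\Phi(G))$ embeds in $\mathrm{GL}_2(\mathbb{F}_p)$, whose Sylow $p$-subgroup has order $p$, while $G/\Phi(G)\cong\mathbb{Z}_p\times\mathbb{Z}_p$, we get $|C_G(\Phi(G))|\in\{p^3,p^4\}$; moreover $|C_G(\Phi(G))|=p^4$ is equivalent to $\Phi(G)\leq Z(G)$, i.e.\ to $Z(G)\not<\Phi(G)$ (then $Z(G)=\Phi(G)$ or $G$ is abelian), in which case $G$ is $3$-connected. So assume $Z(G)<\Phi(G)$; then $C:=C_G(\Phi(G))$ is a maximal subgroup and $\kappa(G)\geq 3$ if and only if no element of order $p$ lies outside $C$.

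Every maximal subgroup other than $C$ is non-abelian; for $p=2$ such a subgroup is $D_8$ — it cannot be $Q_8$, which has no Klein four subgroup — and it always contains an involution outside $\Phi(G)$, so no $2$-group occurs among the exceptions; for $p$ odd, a maximal subgroup $\neq C$ isomorphic to the modular group $M_{p^3}$ has all its elements of order $p$ inside its unique subgroup $\cong\mathbb{Z}_p\times\mathbb{Z}_p$, which is then forced to equal $\Phi(G)$, whereas one of exponent $p$ (the Heisenberg group) has elements of order $p$ outside $\Phi(G)$. Thus $\kappa(G)\geq 3$ if and only if every maximal subgroup other than $C$ is isomorphic to $M_{p^3}$. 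The last step — and the main obstacle — is to go through the classification of groups of order $p^4$ and verify that, among those with $Z(G)<\Phi(G)\cong\mathbb{Z}_p\times\mathbb{Z}_p$, the groups in which every maximal subgroup other than $C_G(\Phi(G))$ is modular are exactly the two groups displayed in (2c), every other such group having a Heisenberg maximal subgroup and hence $\kappa<3$; the prime $p=3$ has to be handled separately, which is the source of the two distinct exceptional presentations.
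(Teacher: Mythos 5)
Your reduction is sound and, up to the last step, follows the same skeleton as the paper: Lemma~\ref{lem:valency3} converts everything into the $3$-valency condition, the cases $\alpha\leq 3$ and $\alpha\geq 5$ and $|\Phi(G)|\in\{1,p,p^3\}$ are dispatched exactly as in the paper, and your treatment of $|\Phi(G)|=p^2$ is a correct (and arguably cleaner) reorganization. Where the paper splits on whether all minimal subgroups lie in $\Phi(G)$, you derive the single criterion that $\kappa(G)\geq 3$ iff every minimal subgroup $A\not\leq\Phi(G)$ has $A\Phi(G)$ elementary abelian, equivalently iff every element of order $p$ lies in $C_G(\Phi(G))$, equivalently (for $Z(G)<\Phi(G)$) iff every maximal subgroup other than $C_G(\Phi(G))$ is modular of order $p^3$. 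The supporting observations (that $M=A\Phi(G)$ is the unique maximal subgroup over $A$, that all order-$p^2$ overgroups of $A$ sit in $M$, the inspection of the five groups of order $p^3$, the $\mathrm{GL}_2(\mathbb{F}_p)$ bound on $|G/C_G(\Phi(G))|$, and the elimination of $p=2$ via $D_8$) all check out.

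The genuine gap is that you stop exactly where the paper's real work begins. Your final sentence asserts that ``going through the classification of groups of order $p^4$'' will show the groups meeting your criterion are precisely the two presentations in (2c), with $p=3$ versus $p>3$ falling out somehow — but this is the content that must be proved, not invoked. In particular, nothing in your argument explains \emph{why} the first exceptional family exists only for $p=3$ and the second only for $p>3$. The paper establishes this by writing explicit presentations from generators $a,b,c$ (items (I)--(V) and the closing paragraph of its proof) and computing $p$-th powers such as
$(ac^x)^p=c^{xp\{1+\frac{1}{6}(p-1)p(p+1)xn\}}a^p$,
which shows that for $p\neq 3$ the element $ac^{-k}$ has order $p$ and lies outside $\Phi(G)$ (killing the first family except at $p=3$), and dually that in the second family the element $ac$ has order $p$ when $p=3$ and the generators commute when $p=2$ (restricting it to $p>3$); it also has to check which parameter triples $(k,m,n)$ give isomorphic groups. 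Without carrying out this determination — whether by such computations or by an explicit, cited walk through Burnside's list with the modularity criterion checked group by group — the ``only if'' direction of (2c) and the $3$-connectedness of the two exceptional groups remain unproved.
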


\begin{proof}
  Obviously, $|G|=p^2$ implies $\Gamma(G)$ consists of isolated
  vertices, hence it cannot be connected. So, let's assume $|G|>p^2$.

  \emph{Case I:} $|G|=p^3$. First, suppose that $\Phi(G)\neq 1$. If
  all maximal subgroups of $G$ are cyclic, then $G$ has a unique
  minimal subgroup and its intersection graph is complete. However,
  $\Gamma(\mathbb{Z}_{p^3})$ has two vertices whereas $\Gamma(Q_8)$
  has four, thus only the latter is $3$-connected among them. If there
  exists a maximal subgroup $M\cong \mathbb{Z}_p\times\mathbb{Z}_p$,
  then any minimal subgroup $X$ of $M$ which is different from
  $\Phi(G)$ is not contained in any maximal subgroup other than $M$,
  as $\langle X,\Phi(G)\rangle$ uniquely determines $M$. That is, $G$
  does not satisfy $3$-valency condition in such case. Now, suppose
  that $\Phi(G)$ is trivial, i.e. $G$ is isomorphic to the elementary
  abelian group of rank $3$. By the correspondence theorem, any
  minimal subgroup is contained in $p+1$ maximal subgroups. Also,
  since any two maximal subgroups of a $p$-group intersects
  non-trivially (by the product formula), maximal subgroups form a
  complete subgraph in $\Gamma(G)$. Therefore, $G$ is $3$-connected in
  this case.

  \emph{Case II:} $|G|=p^4$. Recall that the \emph{rank} of a
  $p$-group is the dimension of $G/\Phi(G)$ as a vector space over the
  field of $p$-elements. If the rank of $G$ is four or three,
  i.e. $\Phi(G)\cong 1 \text{ or } \mathbb{Z}_p$, then for any minimal
  subgroup $X$ we may form $\Phi(G)X$ which is contained in at least
  $p+1$ maximal subgroups of $G$. Clearly, $G$ is $3$-connected in
  this case. On the other hand, if the rank of $G$ is one,
  i.e. $G\cong \mathbb{Z}_{p^4}$, then $\Gamma(G)$ has exactly three
  vertices and hence cannot be $3$-connected. Now we shall confine
  ourselves to the case $G$ is of rank two.

  Suppose that $\Phi(G)\cong\mathbb{Z}_{p^2}$. If $G$ has a unique
  minimal subgroup then it is isomorphic to the quaternion group
  $Q_{16}$ and its intersection graph is complete, hence $3$-connected
  as well. Let us assume there exists a minimal subgroup $X$ of $G$
  which is different from the minimal subgroup $P$ of
  $\Phi(G)$. Notice that $P$ is a necessarily normal subgroup of
  $G$. Then the only maximal subgroup containing $X$ is $M:=\Phi(G)X$
  as any maximal subgroup contains $\Phi(G)$. This in turn implies
  that $PX$ is the only subgroup of order $p^2$ containing $X$, since
  $P$ is the Frattini subgroup of $M$. Therefore $G$ does not satisfy
  $3$-valency condition in such a case, hence it is not $3$-connected.

  Suppose that $\Phi(G)\cong\mathbb{Z}_p\times\mathbb{Z}_p$. If $G$ is
  abelian, then it is isomorphic to
  $\mathbb{Z}_{p^2}\times\mathbb{Z}_{p^2}$ and any minimal subgroup is
  contained in the Frattini subgroup, hence it is $3$-connected. If
  $G$ is not abelian, then either $Z(G)=\Phi(G)$ or
  $Z(G)<\Phi(G)$. This is because, any cyclic extension of a
  central subgroup is abelian and $Z(G)$ intersects any
  normal subgroup non-trivially whenever $G$ is a $p$-group. Let
  $Z(G)=\Phi(G)$. Then a minimal subgroup $P$ is normal in
  $G$ if and only if $P$ is a subgroup of $\Phi(G)$. We show that $G$
  is $3$-connected in such a case. Let $P_i$, $1\leq i\leq p+1$ be
  minimal subgroups of $\Phi(G)$ and let $X,Y$ be two arbitrary
  minimal subgroups that are not contained in $\Phi(G)$. We show that
  there are at least three independent paths between any pair of
  minimal subgroups. Clearly, this holds if the endpoints are $P_i$
  and $P_j$ for any $i\neq j$. Let $A_i:=P_iX$ for $1\leq i\leq
  p+1$.
  Since $X\nleq \Phi(G)$ and $|A_i|=p^2$, $A_i\cap A_j=X$ for
  $i\neq j$. Then, we may form three internally independent paths
  $(X,A_1,M,P_i)$, $(X,A_2,N,P_i)$, and $(X,A_3,T,P_i)$ between $X$
  and $P_i$ where $M,N,$ and $T$ are mutually distinct maximal
  subgroups. Let $B_i:=P_iY$ for $1\leq i\leq p+1$. Clearly,
  $(X,A_i,B_i,Y)$, $1\leq i \leq p+1$ are independent paths between
  $X$ and $Y$.


Let $Z(G)<\Phi(G)$. Obviously, $Z:=Z(G)$ is the unique minimal normal subgroup of $G$. If all minimal subgroups of $G$ are contained in $\Phi(G)$, then clearly $3$-valency condition holds and $G$ is $3$-connected by Lemma~\ref{lem:valency3}. We want to show that under these conditions $G$ is unique up to isomorphism. 

  (I) There exists a maximal subgroup $A$ which is abelian, moreover
  $A\cong \mathbb{Z}_{p^2}\times\mathbb{Z}_p$. By the N/C lemma (see
  \cite[Theorem 7.1]{Rotman1995}),
  $N_G(\Phi(G))/C_G(\Phi(G))=G/C_G(\Phi(G))$
  can be embedded into
  $\mathrm{Aut}(\Phi(G))\cong \mathbb{Z}_p\times\mathbb{Z}_p$ which is
  of order $(p^2-1)(p^2-p)$. Then $C_G(\Phi(G))=\Phi(G)$
  implies $p^2\divides |\mathrm{Aut}(\Phi(G))|$ and this is
  impossible. Also, since the center of $G$ is a proper subgroup of
  $\Phi(G)$, then $C_G(\Phi(G))$ is not the whole group $G$
  either. Thus, $A:=C_G(\Phi(G))$ is an abelian subgroup of
  order $p^3$; and since any maximal subgroup $Y$ of $A$ different
  from $\Phi(G)$ is cyclic,
  $A\cong \mathbb{Z}_{p^2}\times\mathbb{Z}_p$.

  (II) $A:=C_G(\Phi(G))$ is the unique abelian group of order
  $p^3$, moreover $M\cong \mathbb{Z}_{p^2}\rtimes\mathbb{Z}_p$ for any
  maximal subgroup $M$ different from $A$. Suppose that there exists
  an abelian subgroup $B\cong \mathbb{Z}_{p^2}\times\mathbb{Z}_p$
  different from $A$. Then, as $\langle A,B\rangle=G$ and
  $A\cap B=\Phi(G)$, the center $Z$ of $G$ contains $\Phi(G)$ which is
  a contradiction. Therefore, any maximal subgroup $M$ other than $A$
  is isomorphic to $\mathbb{Z}_{p^2}\rtimes\mathbb{Z}_p$, since any
  non-Frattini subgroup of order $p^2$ is cyclic.

  (III) $G$ has a presentation
  $$ \langle a,b,c\mid a^{p^2}=b^p=1, ab=ba, a^p=c^{kp},
  bcb^{-1}=c^{1+p}, aca^{-1}=c^{1+mp}b^n \rangle$$
  for some suitable values of the prime $p$ and integers $k,m,n$. Let
  $a,b\in A$ and $c\in M$ such that $a,c$ are of order $p^2$ and $b$
  is of order $p$. Clearly, those elements generate $G$ and we have
  $ab=ba$, $a^p,c^p\in Z$, and $bcb^{-1}=c^{1+p}$ as
  $\langle b,c\rangle\cong
  \mathbb{Z}_{p^2}\rtimes\mathbb{Z}_p$.
  Moreover, any conjugate of $c$ can be written as
  $c^{\gamma}b^{\beta}$ for some integers $\gamma,\beta$; and since
  $ac^pa^{-1}=c^p=c^{rp}$ where $\gamma\equiv r \pmod{p}$, we have
  $r=1$.

  (IV) $p=3$. We want to show that for $p>3$, there is an element $g$
  of order $p$ such that $g\notin\Phi(G)$. Then the subgroup generated
  by this element is a minimal one and it is not contained in the
  Frattini subgroup contrary to our assumption. Thus, we shall deduce
  $p=3$. Using the above relations, we may obtain
  $b^{\beta}c^{\gamma}=c^{\gamma}b^{\beta}c^{\beta\gamma p}$ and
  $ac^x=c^xab^{xn}u$ where $u=c^{(\frac{1}{2}x(x-1)n+xm)p}$. Clearly
  $ac^x\notin\Phi(G)$ for $p\nmid x$. By some further computation
  $$
  (ac^x)^p=c^{xp\{1+\frac{1}{6}(p-1)p(p+1)xn\}}a^pb^{\frac{1}{2}p(p-1)xn}u^{\frac{1}{2}p(p-1)}=c^{xp\{1+\frac{1}{6}(p-1)p(p+1)xn\}}a^p.  $$
  However, this formula implies that $(ac^{-k})^p=1$ for $p\neq 3$.
 
  (V) Without loss of generality we may take $k=1, m=0, n=-1$. Suppose
  that $p=3$. Clearly, $k\in\{-1,1\}$ and $m,n\in\{-1,0,1\}$. However,
  $n=0$ implies that $C_G(\langle c\rangle)$ is an abelian
  group of order $p^3$ (compare with (I)). As
  $C_G(\langle c\rangle)$ is different from $A$, this
  contradicts with (II). Moreover, using the relation presented in
  (IV), we see that $(ac^k)^3=c^{3(n-k)}$. Therefore, $n$ and $k$ have
  opposite parity, as otherwise, $\langle ac^k\rangle$ would be a
  minimal subgroup which is not contained in $\Phi(G)$. Thus, there
  are totally six distinct triples $(k,m,n)$ that we shall
  consider. If triples $(k_1,m_1,n_1)$ and $(k_2,m_2,n_2)$ yields
  isomorphic groups, we simply write
  $(k_1,m_1,n_1)\sim(k_2,m_2,n_2)$. Now substituting $a^{-1}$ for $a$
  yields an automorphism of $G$ showing that $(1,0,-1)\sim(-1,0,1)$,\,
  $(1,-1,-1)\sim(-1,1,1)$, and $(1,1,-1)\sim(-1,-1,1)$. Also, it can
  be verified that the automorphisms
  $\varphi\colon a\mapsto a, b\mapsto b, c\mapsto c^2$ and
  $\psi\colon a\mapsto ab, b\mapsto b, c\mapsto b^{-1}cb$ yields
  $(1,0,-1)\sim(-1,1,1)$ and $(1,0,-1)\sim(1,1,-1)$
  respectively. Hence, we have
  $$
  (1,0,-1)\sim(1,-1,-1)\sim(1,1,-1)\sim(-1,0,1)\sim(-1,1,1)\sim(-1,-1,1) $$
  Conversely, it can be verified that a group with this presentation
  is of order $81$ and all minimal subgroups are contained in
  $\Phi(G)$. 


Next suppose that there are minimal subgroups of $G$ which do not lie in $\Phi(G)$. Let $X$ be such a minimal subgroup. Observe that $X$ is contained in exactly one maximal subgroup, say $M$. Otherwise, there exist two distinct maximal subgroups such that their intersection strictly contains $\Phi(G)$ which is impossible. For the rest of the proof we assume that $G$ is a $3$-connected group. Then, there exist two subgroups $Y$ and $W$ of order $p^2$ containing $X$, as otherwise,  $3$-valency condition would not be satisfied. Since $Y,W,$ and $\Phi(G)$ are maximal subgroups of $M$ intersecting trivially, we see that $M\cong \mathbb{Z}_p\times\mathbb{Z}_p\times\mathbb{Z}_p$. Moreover, $M$ contains all minimal subgroups of $G$, as otherwise, there would be another maximal subgroup of $G$ which is abelian and this implies that $\Phi(G)$ is in the center of $G$ contrary to our assumption. In other words, any maximal subgroup $L$ of $G$ different from $M$ is isomorphic to $\mathbb{Z}_{p^2}\rtimes\mathbb{Z}_p$. We want to show that $G$ has a presentation 
$$ G\cong \langle a,b,c\mid a^{p^2}=b^p=c^p=1, bc=cb, bab^{-1}=a^{p+1}, cac^{-1}=ab \rangle.  $$
Let $a,b\in L$ and $c\in M$ such that $a$ is of order $p^2$ and $b,c$ are of order $p$. Clearly, those elements generate $G$ and we have $bc=cb$, $a^p\in Z$, and $bab^{-1}=a^{p+1}$ as $\langle a,b\rangle\cong\mathbb{Z}_{p^2}\times\mathbb{Z}_p$. Moreover, any conjugate of $a$ can be written as $a^{\alpha}b^{\beta}$ for some integers $\alpha,\beta$; and since $ca^pc^{-1}=a^p=a^{rp}$ where $\gamma\equiv r \pmod{p}$, we have $r=1$. One may observe that distinct conjugates of $\langle a \rangle$ are generated by $a^{1+mp}b$ for $1\leq m\leq p$; and $c\in M$ can be chosen in a way that $cac^{-1}=ab$ holds. Conversely, it can be verified that a group with this presentation is of order $p^4$ and has the desired structure for $p>3$. Notice that the generators $a$ and $b$ commutes for $p=2$. Also the element $ac$ clearly does not belong to $\Phi(G)$; however its order is $p$ for $p=3$. For the classification of groups of order $p^4$, the reader may refer to \cite[p.~140]{Burnside1955}.

\end{proof}

\begin{proof}[Proof of Theorem C]
  It can be easily verified that nilpotent groups of order $p^2q$ do
  not satisfy the $3$-valency condition. (This is also a consequence
  of Lemma~\ref{lem:p2q-2}.) Also, a nilpotent group of order $pqr$ is
  cyclic and does not satisfy the $3$-valency condition. Then the first part of the Theorem follows from Lemmas \ref{lem:valency3}, \ref{lem:pgroup3}, and
  \ref{lem:order}. 

For the second part we argue as follows. Let $A$ and $B$ be two distinct minimal subgroups of a finite solvable group $G$ such that there are at least four distinct prime divisors of $|G|$. Suppose that $A$ and $B$ are of same order, say $p$. Let $A_q$, $A_r$, and $A_s$ be some maximal Hall subgroups of $G$ containing $A$ such that their indexes is a power of prime numbers $q$, $r$, and $s$ respectively. Also, let $B_q$, $B_r$, and $B_s$ be some maximal Hall subgroups containing $B$. (Of course, $[G:B_q]=q^{\alpha}$ for some integer $\alpha$, and so on.) By the product formula $(A,A_q,B_r,B)$, $(A,A_r,B_s,B)$, and $(A,A_s,B_q,B)$ are three independent paths between $A$ and $B$. Similar arguments can be applied if $|A|\neq|B|$.
\end{proof}

\begin{bibdiv}
\begin{biblist}

\bib{Berkovich2008}{book}{
      author={Berkovich, Y.},
       title={Groups of prime power order. {V}ol. 1},
      series={de Gruyter Expositions in Mathematics},
   publisher={Walter de Gruyter GmbH \& Co. KG, Berlin},
        date={2008},
      volume={46},
        note={With a foreword by Zvonimir Janko},
}

\bib{Burnside1955}{book}{
      author={Burnside, W.},
       title={Theory of groups of finite order},
   publisher={Dover Publications, Inc., New York},
        date={1955},
        note={2d ed},
}

\bib{Diestel2005}{book}{
      author={Diestel, R.},
       title={Graph theory},
     edition={Third},
      series={Graduate Texts in Mathematics},
   publisher={Springer-Verlag, Berlin},
        date={2005},
      volume={173},
}

\bib{Gorenstein1980}{book}{
      author={Gorenstein, D.},
       title={Finite groups},
     edition={Second},
   publisher={Chelsea Publishing Co.},
     address={New York},
        date={1980},
}

\bib{Isaacs2008}{book}{
      author={Isaacs, I.~M.},
       title={Finite group theory},
      series={Graduate Studies in Mathematics},
   publisher={American Mathematical Society, Providence, RI},
        date={2008},
      volume={92},
}

\bib{Passman1968}{book}{
      author={Passman, D.},
       title={Permutation groups},
   publisher={W. A. Benjamin, Inc., New York-Amsterdam},
        date={1968},
}

\bib{Rotman1995}{book}{
      author={Rotman, J.~J.},
       title={An introduction to the theory of groups},
     edition={Fourth},
      series={Graduate Texts in Mathematics},
   publisher={Springer-Verlag},
     address={New York},
        date={1995},
      volume={148},
}

\bib{Shen2010}{article}{
      author={Shen, R.},
       title={Intersection graphs of subgroups of finite groups},
        date={2010},
     journal={Czechoslovak Math. J.},
      volume={60(135)},
      number={4},
       pages={945\ndash 950},
         url={http://dx.doi.org/10.1007/s10587-010-0085-4},
}

\bib{Lucido2003}{article}{
   author={Lucido, M.~S.},
   title={On the poset of non-trivial proper subgroups of a finite group},
   journal={J. Algebra Appl.},
   volume={2},
   date={2003},
   number={2},
   pages={165--168},
}

\end{biblist}
\end{bibdiv}

\end{document}